\renewcommand*\l@section{\@dottedtocline{1}{1.5em}{2.3em}}
\theoremstyle{plain}
\newtheorem{theorem}{Theorem}
\newtheorem{lemma}[theorem]{Lemma}
\newtheorem{example}[theorem]{Example}
\newtheorem{corollary}[theorem]{Corollary}
\newtheorem{remark}[theorem]{Remark}
\theoremstyle{definition}
\newtheorem{definition}[theorem]{Definition}
\newtheoremstyle{myrem}
 {3pt}
 {3pt}
 {\normalsize}
 { }
 {\itshape}
 {:}
 { }
 {}
 \theoremstyle{myrem}
 \appto\remark{\leftskip\parindent}
 \appto\remark{\rightskip\parindent}
\numberwithin{equation}{section}
\numberwithin{theorem}{section}
\begin{document}

\begin{center}
{\Large {\textbf {Discrete Morse Theory on Digraphs}}}
 \vspace{0.58cm}

Yong Lin$^{*}$, Chong Wang$^{\dag}$, Shing-Tung Yau$^{\S}$

\bigskip

\bigskip

    \parbox{24cc}{{\small
{\textbf{Abstract}.}
In this paper, we give a necessary and sufficient condition that  discrete Morse functions on a digraph can be extended to be  Morse functions on its transitive closure, from this we can extend the Morse theory to digraphs by using quasi-isomorphism between path complex and discrete Morse complex, we also prove a general sufficient condition for digraphs that the Morse functions satisfying this necessary and sufficient condition.
}}
 \end{center}

 \vspace{1cc}

\footnotetext[1]
{ {\bf 2010 Mathematics Subject Classification.}  	 55U15,  55N35.
}

\footnotetext[2]{{\bf Keywords and Phrases.}   discrete Morse theory,   quasi-isomorphism,  path homology. }

\footnotetext[3] {$^\dag$   Corresponding author. }

\section{Introduction}
Digraphs are   generalizations of graphs by assigning a direction or two directions to each edge.  A graph is a digraph where each edge is assigned with two directions.   In 2009, J. Bang-Jensen and G.Z. Gutin \cite{dig} studied digraphs   and gave applications of digraphs in quantum mechanics,  finite automata,  deadlocks of computer processes, etc.  In  2012, A. Grigor'yan, Y. Lin, Y. Muranov and S.T. Yau  \cite{9} initiated the study of path complex on digraphs and defined the path homology of digraphs.  In 2015,  A. Grigor'yan, Y. Lin, Y. Muranov and S.T. Yau  \cite{3, 4}  studied the cohomology of digraphs and graphs by using the path homology theory.   In 2018,  A. Grigor'yan, Y. Muranov, V. Vershinin and S.T. Yau \cite{yau2} generalized the path homology theory of digraphs and constructed the path homology theory of multigraphs and quivers.   

A {\it  digraph} $G$ is a pair $(V,E)$ where $V$ is a set  and $E$ is a subset of $V\times V$. The elements of $V$ are called {\it vertices} and $V$ is called the {\it vertex set}. For any vertices $u, v\in V$,  if  $(u, v)\in E$, then $(u, v)$ is called a {\it directed edge}, and is denoted as $u\to v$. For any vertex $v\in V(G)$, the number of directed edges starting from $v$ is called the out-degree of $v$, and the number of directed edges ending at $v$ is called the  in-degree of $v$. The sum of in-degree and out-degree is called the degree of $v$, denoted as $D(v)$. The transitive closure $\overline{G}$  of a digraph $G$ is the smallest digraph containing $G$ such that for any two directed edges $u\to v$ and $v\to w$ of $\overline{G}$,  there is a directed edge $u\to w$ of $\overline{G}$.


Let $G$ be a digraph and $V$ be the vertex set of $G$.   For each $n\geq 0$, an {\it  elementary $n$-path} (or {\it $n$-path} for short) on $V$ is a sequence $v_0v_1\cdots v_n$ of vertices in $V$ where $v_0$, $v_1$, $\ldots$, $v_n\in V$. Here the vertices $v_0$, $v_1$, $\ldots$, $v_n$ are not required to be distinct.  An {\it  allowed elementary $n$-path} on $G$ is a $n$-path $v_0 v_1\ldots v_n$ on $V$ such that for each $i\geq 1$, $v_{i-1}\to v_i$ is a directed edge of $G$ and  $v_{i-1}\not=v_i$ for each $1\leq i \leq n$. Let $\Lambda_n(V)$ be the free $R$-module consisting of all the formal linear combinations (with coefficients in a commutative ring $R$ with unit) of the $n$-paths on $V$. Let $P_n(G)$ be the free $R$-module consisting of all the formal linear combinations of allowed elementary $n$-paths on $G$. Then $P_n(G)$ is a   sub-$R$-module of $\Lambda_n(V)$.

The boundary map $\partial_n: \Lambda_n(V)\longrightarrow \Lambda_{n-1}(V)$ is defined as
by letting
\begin{eqnarray*}
\partial_n(v_0v_1\ldots v_n)=\sum_{i=0}^n (-1)^i d_i(v_0v_1\ldots v_n)
\end{eqnarray*}
where $d_i$ is the face map given by
\begin{eqnarray*}
d_i(v_0v_1\ldots v_n)= v_0v_1\ldots\hat{v_i}\ldots v_n.
\end{eqnarray*}
Note that $\partial_n$ is an $R$-linear map from $\Lambda_n(V)$ to $\Lambda_{n-1}(V)$ satisfying $\partial_{n}\partial_{n+1}=0$ for each $n\geq 0$ (cf. \cite{9, 10, 3, 4, 2, yau2, 1}).  Hence $\{\Lambda_n(V),\partial_n\}_{n\geq 0}$ is a chain complex.
We define
\begin{eqnarray*}
\Omega_n(G)&=& P_n(G) \cap (\partial_n)^{-1} P_{n-1}(G),\\
\Gamma_n(G)&=& P_n(G)+ \partial_{n+1} P_{n+1}(G).
\end{eqnarray*}
Then as graded $R$-modules,
\begin{eqnarray*}
\Omega_*(G) \subseteq  P_*(G)\subseteq  \Gamma_*(G) \subseteq \Lambda_*(V).
\end{eqnarray*}
And as chain complexes,
\begin{eqnarray*}
\{\Omega_n(G), \partial_n\mid_{\Omega_n(G)} \}_{n\geq 0} \subseteq  \{\Gamma_n(G), \partial_n\mid_{\Gamma_n(G)} \}_{n\geq 0}\subseteq \{\Lambda_n(V),\partial_n\}_{n\geq 0}.
\end{eqnarray*}
By \cite[Proposition~2.4]{h1}, the canonical inclusion
\begin{eqnarray*}
\iota:  \Omega_n(G)  \longrightarrow  \Gamma_n(G), ~~~ n\geq 0
\end{eqnarray*}
of chain complexes  induces an isomorphism between the homology groups
\begin{eqnarray*}
\iota_*: H_m(\{\Omega_n(G), \partial_n\mid_{\Omega_n(G)} \}_{n\geq 0})\overset{\cong}{\longrightarrow} H_m(\{\Gamma_n(G), \partial_n\mid_{\Gamma_n(G)} \}_{n\geq 0}), ~~~ m\geq 0.
\end{eqnarray*}
This isomorphism gives  the {\it path homology} of $G$.   We denote the path homology  defined in this way  as $H_n(G;R)$ ($n\geq 0$), or  simply $H_n(G)$  if there is no danger of confusion.

Morse theory originated from the study of homology groups and cell structure of smooth manifolds. In the 1990s, the discrete Morse theory for cell complexes and simplicial complexes was given (cf. \cite{forman1, forman2, forman3, witten}). In recent years, the discrete Morse theory of cell complexes and simplicial complexes has been applied to graphs, and the discrete Morse theory of graphs has been studied (cf. \cite{ayala1, ayala2, ayala3, ayala4}). Discrete Morse theory can greatly reduce the number of cells and simplices,  simplify the calculation of homology groups, and can be applied to topological data analysis (cf. \cite{lewiner, nanda, kannan}, etc). In these study, people use clique as flag complexes on graph which is a similarity of simplicial complexes.


In this paper,  we further study the discrete Morse theory for digraphs, try to make use of the discrete Morse theory to greatly reduce the initial information which does not affect the path homology groups of digraphs,  so as to simplify the calculation. Not analogue to the flag complex, the sub complex of a path complex is not necessary a path complex. So we need to extend the path complex to its transitive closure and prove that path homology is invariant under this extension.

Let $G$ be a digraph and $f: V(G)\longrightarrow [0,+\infty)$  a discrete Morse function on $G$ as defined in \cite{wang} and Definition 2.1 in next section.  Consider the following condition
\begin{quote}
($*$). for each vertex $v\in V(G)$,  there exists at most one zero point of $f$ in all allowed elementary paths starting or ending at  $v$.
\end{quote}
Then $f$ can be extended to be  a Morse function $\overline{f}$ on the transitive closure $\overline{G}$ of $G$ such that $\overline{f}(v)=f(v)$ for each vertex $v\in E(G)$ if and only if $f$ satisfies Condition ($*$). This is proved in Theorem~\ref{th-666}.

For $n\geq 0$, define an $R$-linear map $\mathrm{grad}f: P_n(G)\rightarrow P_{n+1}(G)$ such that for  an allowed elementary n-path $\alpha$ on $G$, if there exists an allowed elementary $(n + 1)$-path $\gamma$ satisfying $\gamma>\alpha$ and $f(\gamma)=f(\alpha)$, set
\begin{eqnarray*}
(\mathrm{grad}f)(\alpha)=-\langle\partial\gamma,\alpha\rangle\gamma
\end{eqnarray*}
and otherwise $(\mathrm{grad}f)(\alpha)=0$. We call $\mathrm{grad}f$ the (algebraic) discrete gradient vector field of $f$, denoted as $V_f$. Here $\langle, \rangle$ is  the  inner product in $\Lambda_n(V)$  (with respect to which the $n$-paths are orthonormal).

Let $\overline{V}=\mathrm{grad}\overline{f}$ be the discrete gradient vector field on $\overline{G}$. By \cite[Definition~6.2]{forman1}), define the discrete gradient flow of $\overline{G}$ as
\begin{eqnarray*}
\overline{\Phi}=\mathrm{Id}+\partial \overline{V}+\overline{V}\partial,
\end{eqnarray*}
which is an $R$-linear map
\begin{eqnarray*}
\overline{\Phi}: P_n(\overline{G})\longrightarrow P_n(\overline{G}), ~~~~n\geq 0.
\end{eqnarray*}
Denote the stabilization map of $\overline{\Phi}$ as $\overline{\Phi}^{\infty}$. Let $\mathrm{Crit}_n(G)$ be the vector space consisting of all the formal linear combinations of critical $n$-paths (see Definition~\ref{def-1.1}) on $G$. Suppose $\Omega_*(G)$ is $\overline{V}$-invariant, that is, $\overline{V}(\Omega_*(G))\subseteq \Omega_*(G)$. Then in Theorem~\ref{th-8.3}, we prove that
\begin{eqnarray*}
H_m(G)\cong  H_m(\{\Omega_n(G)\cap\overline{\Phi}^{\infty}(\mathrm{Crit}_n(\overline{G})),  \partial_n\}_{n\geq 0}),~~~ m\geq 0.
\end{eqnarray*}

Finally, in Theorem~\ref{th-33}, it is proved  that Condition ($*$) is not very harsh. In fact, we can define Morse functions satisfying Condition ($*$) on quite general digraphs, so Theorem~\ref{th-8.3} can be used  to simplify the calculation of their path homology groups. We give some examples to illustrate.

\smallskip

\section{Discrete Morse Theory for Digraphs}

In this section,   in Theorem~\ref{th-666}, we study the  extendability of a Morse function $f$ on digraph $G$ to its transitive closure $\overline{G}$. In Theorem~\ref{th-6.11}, we  prove a quasi-isomorphism of chain complexes  and give the discrete Morse theory for digraphs in Theorem~\ref{th-8.3}. 

\smallskip

\subsection{Definitions and Some Properties}

Let $G$ be a digraph. For any allowed elementary paths $\gamma$ and $\gamma'$, if $\gamma'$ can be obtained from $\gamma$ by removing some vertices, then we write $\gamma'<\gamma$ or $\gamma>\gamma'$.

\begin{definition}(cf. \cite{wang})\label{def-1.1}
 A  map $f: V(G)\longrightarrow [0,+\infty)$ is called a  {\it discrete Morse function} on $G$, if for any allowed elementary path $\alpha=v_0v_1\cdots v_n$ on $G$, both of the followings hold:
 \begin{quote}
\begin{enumerate}[(i).]
\item
$\#\Big\{\gamma^{(n+1)}>\alpha^{(n)}\mid f(\gamma)=f(\alpha)\Big\}\leq 1$;
\item
$\#\Big\{\beta^{(n-1)}<\alpha^{(n)}\mid f(\beta)=f(\alpha)\Big\}\leq 1$.
\end{enumerate}
\end{quote}
where
\begin{eqnarray}\label{eq-1.1}
f(\alpha)=f(v_0v_1\cdots v_n)= \sum_{i=0}^n f(v_i).
\end{eqnarray}
For an allowed elementary path $\alpha$, if in both (i) and (ii), the inequalities hold strictly, then $\alpha$ is called {\it critical}.
\end{definition}

\begin{definition}(cf. \cite[Definition~0.6]{witten})\label{def-witten}
A function $f: V(G)\longrightarrow [0,+\infty)$ is called a  {\it discrete Witten-Morse function} on $G$ if, for any allowed elementary path $\alpha$,
\begin{quote}
\begin{enumerate}[(i)]
\item
$f(\alpha)<\mathrm{average}\{f(\gamma_1), f(\gamma_2)\}$ where $\gamma_1>\alpha$ and $\gamma_2>\alpha$;
\item
$f(\alpha)>\mathrm{average}\{f(\beta_1), f(\beta_2)\}$ where $\beta_1<\alpha$ and $\beta_2<\alpha$.
\end{enumerate}
\end{quote}
\end{definition}
Note that each Witten-Morse function is, in fact, a Morse function.

\begin{definition}(cf. \cite[Definition~0.7]{witten})\label{def-flat}
A discrete Witten-Morse funtion is {\it flat} if, for any allowed elementary path $\alpha$
\begin{quote}
\begin{enumerate}[(i)]
\item
$f(\alpha)\leq \mathrm{min}\{f(\gamma_1), f(\gamma_2)\}$ where $\gamma_1>\alpha$ and $\gamma_2>\alpha$;
\item
$f(\alpha)\geq \mathrm{max}\{f(\beta_1), f(\beta_2)\}$ where $\beta_1<\alpha$ and $\beta_2<\alpha$.
\end{enumerate}
\end{quote}
\end{definition}

By (\ref{eq-1.1}) and Definitions~\ref{def-flat}, it follows that each discrete Morse function on a digraph is a discrete flat Witten-Morse function.

A directed loop on $G$ is an allowed elementary path $v_0 v_1 \cdots v_n v_0$, $n \geq 1$.
\begin{lemma}\label{le-1.14}
Let $G$ be a digraph and $f$  a discrete Morse function on $G$. Let $\alpha=v_0v_1\cdots v_n v_0$ be a directed loop. Then for each $0\leq i\leq n$, $f(v_i)>0$.
\end{lemma}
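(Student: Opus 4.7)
The plan is to reduce Lemma~\ref{le-1.14} to Lemma~\ref{le-1.13} via the cyclic symmetry of the directed loop. The first observation is that values of $f$ lie in $[0,+\infty)$, so $f(v_i)>0$ is equivalent to $f(v_i)\ne 0$. Hence I would argue by contradiction: assume $f(v_i)=0$ for some $0\le i\le n$, and produce an allowed elementary path on $G$ containing at least two indices at which $f$ vanishes, contradicting Lemma~\ref{le-1.13}.

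For the case $i=0$, no rotation is needed: the given loop $\alpha=v_0v_1\cdots v_n v_0$ is itself an allowed elementary path (of length $n+1$), and $v_0$ occurs at both the first and last position, so $f(v_0)=0$ would immediately give two indices at which $f$ vanishes. For a general index $i$ with $1\le i\le n$, I would cyclically rotate the loop and consider
\begin{eqnarray*}
\tilde\alpha_i = v_i v_{i+1}\cdots v_n v_0 v_1 \cdots v_{i-1} v_i.
\end{eqnarray*}
Since the directed edges $v_i\to v_{i+1},\dots,v_{n-1}\to v_n,v_n\to v_0,v_0\to v_1,\dots,v_{i-1}\to v_i$ all exist in $G$ (they are exactly the edges of $\alpha$, cyclically reordered) and consecutive vertices are distinct (inherited from $\alpha$ being allowed), $\tilde\alpha_i$ is again an allowed elementary path on $G$. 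In $\tilde\alpha_i$ the vertex $v_i$ appears at both endpoints, so $f(v_i)=0$ would again force two indices at which $f$ vanishes, contradicting Lemma~\ref{le-1.13}.

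I do not anticipate a serious obstacle; the only point that requires a moment of care is to confirm that cyclic rotation preserves the property of being an allowed elementary path, which is immediate from the definition of a directed loop. The proof is thus a short application of Lemma~\ref{le-1.13} together with the cyclic invariance of directed loops.
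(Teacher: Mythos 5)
Your proof is correct and takes essentially the same approach as the paper: both arguments cyclically rotate the loop to $v_i v_{i+1}\cdots v_n v_0\cdots v_{i-1}v_i$ so that the vertex with $f$-value zero sits at both endpoints. The only cosmetic difference is that you then invoke Lemma~\ref{le-1.13}, while the paper inlines the identical two-faces contradiction with Definition~\ref{def-1.1}(ii) that underlies that lemma's proof.
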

\begin{proof}
 Suppose to the contrary, $f(v_i)=0$ for some $i$. Let $\alpha'=v_iv_{i+1}\cdots v_n v_0\cdots v_{i-1}v_i$, $\beta_1=v_iv_{i+1}\cdots v_n v_0\cdots v_{i-1}$ and $\beta_2=v_{i+1}\cdots v_n v_0\cdots v_{i-1}v_i$ where $v_{i-1}=v_n$ for $i=0$ and $v_{n+1}=v_0$ for $i=n$. Then $f(\alpha')=f(\beta_1)=f(\beta_2)$. This contradicts Definition~\ref{def-1.1} (ii). The lemma follows.
\end{proof}

\begin{lemma}\label{le-1.13}
Let $G$ be a digraph and $f$  a discrete Morse function on $G$. Then for any allowed elementary path $\alpha=v_0\cdots v_n$ in $G$, there exists at most one index $i$ such that $f(v_i)=0$.
\end{lemma}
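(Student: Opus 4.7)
The plan is to argue by contradiction using condition (ii) of Definition~\ref{def-1.1}, but applied not to $\alpha$ itself; instead I would apply it to the consecutive sub-path sitting between the two hypothetical zeros. The reason is that deleting an interior vertex of $\alpha$ need not preserve allowedness (it would require a shortcut directed edge in $G$), so there is no \emph{a priori} codimension-one allowed face of $\alpha$ witnessing equal $f$-value; passing to the subpath between the two zeros puts those zeros at the endpoints, where deletion is always admissible.

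Concretely, suppose for contradiction that $f(v_i)=f(v_j)=0$ for some $0\le i<j\le n$. First I would observe that the consecutive segment $\alpha':=v_iv_{i+1}\cdots v_j$ is an allowed elementary path of dimension $j-i\ge 1$: allowedness of $\alpha$ gives $v_{k-1}\to v_k$ for every $1\le k\le n$, and a fortiori for every consecutive pair inside $\alpha'$.

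Next I would exhibit two distinct codimension-one allowed sub-paths of $\alpha'$ whose $f$-values agree with $f(\alpha')$. Set
\begin{eqnarray*}
\beta_1 := v_{i+1}v_{i+2}\cdots v_j, \qquad \beta_2 := v_iv_{i+1}\cdots v_{j-1},
\end{eqnarray*}
obtained from $\alpha'$ by deleting the initial and the terminal vertex respectively. Deleting the first or the last vertex of an allowed path always yields an allowed path, so $\beta_1,\beta_2<\alpha'$ in the sense of the excerpt. They are distinct because $\beta_2$ contains $v_i$ while $\beta_1$ does not; in the boundary case $j=i+1$ they are simply the two distinct $0$-paths $v_j$ and $v_i$, which are still valid allowed elementary paths. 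From the additivity formula $f(\alpha')=\sum_{k=i}^{j} f(v_k)$ together with $f(v_i)=f(v_j)=0$, I get $f(\beta_1)=f(\alpha')-f(v_i)=f(\alpha')$ and $f(\beta_2)=f(\alpha')-f(v_j)=f(\alpha')$.

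This produces two distinct elements of $\{\beta^{(j-i-1)}<\alpha'^{(j-i)}\mid f(\beta)=f(\alpha')\}$, contradicting condition (ii) of Definition~\ref{def-1.1} applied to $\alpha'$. I do not foresee any substantive obstacle: the only thing requiring care is the choice to work with $\alpha'$ rather than $\alpha$ and the verification of the boundary case $j=i+1$, both handled above.
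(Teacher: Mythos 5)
Your proof is correct and follows essentially the same argument as the paper's: pass to the consecutive subpath $\alpha'=v_i\cdots v_j$ and exhibit the two end-deletions $v_{i+1}\cdots v_j$ and $v_i\cdots v_{j-1}$ as distinct faces with the same $f$-value, contradicting Definition~2.1(ii). If anything, your explicit handling of the boundary case $j=i+1$ is more careful than the paper's, which sets it aside with the unnecessary (and not actually justified) claim that allowedness forces $j\neq i+1$.
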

\begin{proof}
 Suppose to the contrary, there are two  indices $i$ and $j$ such that $f(v_i)=f(v_j)=0$ ($i\not=j$). Without loss of generality, $i<j$. Since $\alpha$ is allowed, $v_i\not=v_{i+1}$. 
 Let $\alpha'=v_i\cdots v_j$, $\beta_1=v_i\cdots v_{j-1}$ and $\beta_2=v_{i+1}\cdots v_j$. Then $\beta_1\not=\beta_2$ and  $f(\alpha')=f(\beta_1)=f(\beta_2)$. This contradicts Definition~\ref{def-1.1} (ii). Hence the lemma follows.
\end{proof}

\subsection{Extension of Morse Functions on Digraphs}

\begin{definition}\cite[Section~2.3]{dig}\label{trans}
 A digraph $G$ is called {\it transitive}, if for any two directed edges $u\to v$ and $v\to w$ of $G$, there is a directed edge $u\to w$ of $G$.
\end{definition}
\begin{remark}\label{re-3.1.1}
The digraph $G$ is transitive if and only if  $P(G)$ is  {\it perfect} (\cite[Definition~3.4]{9}).
\end{remark}

The next lemma is straight-forward to verify.

\begin{lemma}\cite[Section~2.3]{dig}\label{le-6.2.2}
For any digraph $G$, there exists a digraph $\overline G$ such that
\begin{enumerate}[(i).]
\item
each directed edge of $G$ is a directed edge of $\overline G$;
\item
$\overline G$ is transitive;
\item
any digraph $G'$ satisfying (i) and (ii), $\overline G$ is contained in $G'$.
\end{enumerate}
We call $\overline G$ the {\it  transitive closure} of $G$. A digraph $G$ is transitive if and only if $\overline G=G$.
\qed
\end{lemma}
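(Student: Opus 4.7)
The plan is to build $\overline{G}$ via a standard universal-property / intersection construction, so that properties (i)--(iii) fall out essentially for free, and then read off the iff from (i)--(iii).

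First, I would fix the vertex set: $\overline G$ will have vertex set $V(G)$. Let $\mathcal F$ denote the family of all transitive digraphs $H$ on $V(G)$ whose edge set contains $E(G)$. This family is nonempty, because the digraph $K$ on $V(G)$ with $E(K) = \{(u,v): u,v \in V(G),\, u \neq v\}$ is transitive and contains $G$. Now define
\begin{eqnarray*}
\overline G = \Big(V(G),\ \bigcap_{H \in \mathcal F} E(H)\Big).
\end{eqnarray*}
Property (i) is immediate since $E(G) \subseteq E(H)$ for every $H \in \mathcal F$. Property (iii) is equally immediate: any $G'$ satisfying (i) and (ii) lies in $\mathcal F$, hence $E(\overline G) \subseteq E(G')$, so $\overline G$ is contained in $G'$. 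For property (ii), suppose $u \to v$ and $v \to w$ are directed edges of $\overline G$. Then both edges lie in every $H \in \mathcal F$, and since each such $H$ is transitive, $u \to w$ lies in every $H \in \mathcal F$, hence in $\overline G$.

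For the final assertion, one direction is trivial: if $\overline G = G$, then $G$ is transitive by (ii). Conversely, if $G$ is transitive, then $G$ itself belongs to $\mathcal F$, so by (iii) we have $E(\overline G) \subseteq E(G)$; combined with (i), this gives $\overline G = G$.

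No step here is a genuine obstacle; the only point to be careful about is ensuring $\mathcal F$ is nonempty so that the intersection is well-defined, which is why one explicitly exhibits the complete digraph on $V(G)$ as a member. An alternative (equivalent) presentation would be the constructive one, setting $G_0 = G$ and iteratively adjoining $u \to w$ whenever $u \to v, v \to w \in G_n$, and taking $\overline G = \bigcup_n G_n$; this is useful when one wants an algorithm, but the intersection description is the cleanest route to the three universal properties stated in the lemma.
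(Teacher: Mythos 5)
The paper does not actually prove this lemma: it is imported from Bang-Jensen and Gutin and declared ``straight-forward to verify,'' so there is no argument of the authors' to compare yours against. Your intersection construction is the standard one, and the verifications of (i), (ii), (iii) and of the final equivalence are all correct in structure.

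One detail needs repair: your witness for $\mathcal F\neq\emptyset$. Under the paper's Definition~\ref{trans}, transitivity quantifies over \emph{all} pairs of directed edges $u\to v$ and $v\to w$, including the case $w=u$. Consequently the loop-free complete digraph $K$ with $E(K)=\{(u,v):u\neq v\}$ is \emph{not} transitive once $|V(G)|\geq 2$: it contains both $u\to v$ and $v\to u$ but not the required edge $u\to u$. Since the paper defines a digraph by $E\subseteq V\times V$ (self-loops are not excluded by the definition), the correct witness is the full digraph with edge set $V\times V$, which is transitive and contains $G$; with that one substitution your argument goes through verbatim. Your alternative iterative construction $\overline G=\bigcup_n G_n$ sidesteps the issue entirely, at the cost of having to verify that the union is transitive and is contained in every transitive $G'\supseteq G$ (by induction on $n$). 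A second, smaller point: as stated, (iii) allows $G'$ to have a vertex set larger than $V(G)$, whereas your family $\mathcal F$ consists of digraphs on $V(G)$; this is harmless, because the induced subdigraph of such a $G'$ on $V(G)$ is again transitive and contains $G$, hence lies in $\mathcal F$, but it deserves a sentence.
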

\smallskip
\begin{remark}\label{re-11}
For any directed edge $u\to v$ in $E(\overline{G})\setminus E(G)$, there exists a sequence of vertices $w_1w_2\cdots w_k$ ($k\geq 1$) in $V(G)$ such that $uw_1\cdots w_k v$ is an allowed elementary path on $G$. For example,

\begin{figure}[!htbp]
 \begin{center}
\begin{tikzpicture}[line width=1.5pt]

\coordinate [label=left:$v_0$]    (A) at (2,2);
 \coordinate [label=left:$v_1$]   (B) at (3,3);
 \coordinate  [label=right:$v_2$]   (C) at (4,3);
\coordinate  [label=right:$v_3$]   (D) at (5,2);
\coordinate  [label=right:$v_4$]   (E) at (4,1);
 \coordinate [label=left:$v_5$]   (F) at (3,1);

\coordinate[label=left:$G$:] (H) at (1,2);
  \draw [line width=1.5pt]  (A) -- (B);
   \draw [line width=1.5pt]  (B) -- (C);
      \draw [line width=1.5pt]  (C) -- (D);
      \draw [line width=1.5pt]  (D) -- (E);
            \draw [line width=1.5pt]  (A) -- (F);
                 \draw [line width=1.5pt]  (F) -- (E);

 \draw[->] (2,2) -- (2.5,2.5);
 \draw[->] (2,2) -- (2.5,1.5);

\draw[->] (3,3) -- (3.5,3);

\draw[->] (4,3) -- (4.5,2.5);

\draw[->] (3,1) -- (3.5,1);

\draw[->] (5,2) -- (4.5,1.5);

\fill (2,2)  circle (2.5pt) (3,3)  circle (2.5pt)  (4,3) circle (2.5pt) (5,2) circle (2.5pt) (4,1) circle (2.5pt) (3,1)  circle (2.5pt);

\coordinate [label=left:$v_0$]    (A) at (2+6,2);
 \coordinate [label=left:$v_1$]   (B) at (3+6,3);
 \coordinate  [label=right:$v_2$]   (C) at (4+6,3);
\coordinate  [label=right:$v_3$]   (D) at (5+6,2);
\coordinate  [label=right:$v_4$]   (E) at (4+6,1);
 \coordinate [label=left:$v_5$]   (F) at (3+6,1);

\coordinate[label=left:$\overline{G}$:] (I) at (1+6,2);
  \draw [line width=1.5pt]  (A) -- (B);
   \draw [line width=1.5pt] (A) -- (C);
    \draw [line width=1.5pt]  (A) -- (D);
    \draw [line width=1.5pt]  (A) -- (E);

   \draw [line width=1.5pt]  (B) -- (C);
   \draw [line width=1.5pt]  (B) -- (D);
    \draw [line width=1.5pt]  (B) -- (E);

      \draw [line width=1.5pt]  (C) -- (D);
       \draw [line width=1.5pt]  (C) -- (E);

      \draw [line width=1.5pt]  (D) -- (E);
            \draw [line width=1.5pt]  (A) -- (F);
                 \draw [line width=1.5pt]  (F) -- (E);

 \draw[->] (2+6,2) -- (2.5+6,2.5);
 \draw[->] (2+6,2) -- (2.5+6,1.5);
 \draw[->] (2+6,2) -- (10.5,2);
 \draw[->] (2+6,2) -- (9.8,2.9);
  \draw[->] (2+6,2) -- (9.8,1.1);

\draw[->] (3+6,3) -- (3.5+6,3);
\draw[->] (3+6,3) -- (10.6,2.2);
\draw[->] (3+6,3) -- (9.7,1.6);

\draw[->] (4+6,3) -- (4.5+6,2.5);
\draw[->] (4+6,3) -- (10,1.5);

\draw[->] (3+6,1) -- (3.5+6,1);

\draw[->] (5+6,2) -- (4.5+6,1.5);

\fill (2+6,2)  circle (2.5pt) (3+6,3)  circle (2.5pt)  (4+6,3) circle (2.5pt) (5+6,2) circle (2.5pt) (4+6,1) circle (2.5pt) (3+6,1)  circle (2.5pt);
 \end{tikzpicture}
\end{center}
\caption{Remark~\ref{re-11}.}
\end{figure}
\begin{eqnarray*}
E(\overline{G})\setminus E(G)=\{v_0\to v_2, v_0\to v_3, v_0\to v_4, v_1\to v_3, v_1\to v_4, v_2\to v_4\},
\end{eqnarray*}
in which there exists an allowed elementary path on $G$  for each edge in $E(\overline{G})\setminus E(G)$. Specifically,
\begin{eqnarray*}
&&v_0\to v_2\in E(\overline{G})\setminus E(G)  \text{ corresponds to } v_0v_1v_2 \in P(G);\\
&&v_0\to v_3\in E(\overline{G})\setminus E(G)  \text{ corresponds to } v_0v_1v_2v_3 \in P(G);\\
&&v_0\to v_4\in E(\overline{G})\setminus E(G)  \text{ corresponds to } v_0v_1v_2v_3v_4 \in P(G);\\
&&v_1\to v_3\in E(\overline{G})\setminus E(G)  \text{ corresponds to } v_1v_2v_3 \in P(G);\\
&&v_1\to v_4\in E(\overline{G})\setminus E(G)  \text{ corresponds to } v_1v_2v_3v_4 \in P(G);\\
&&v_2\to v_4\in E(\overline{G})\setminus E(G)  \text{ corresponds to } v_2v_3v_4 \in P(G).
\end{eqnarray*}
Moreover, let $\alpha=v_0\cdots v_4\in P(G)$, $v_i\to v_j$ ($0\leq i<j\leq 4$) are directed edges in $E(\overline{G})$.
\end{remark}

\smallskip

 Let $G$ be a digraph and $f$ a discrete Morse function on $G$. Then $f$ may {\bf not} be extendable to be a discrete Morse function on $\overline{G}$.

\begin{example}\label{ex-1.1}
Let $G$ be a digraph with the set of vertices $V(G)=\{v_0,v_1,v_2, v_3\}$ and the set of directed edges $E(G)=\{v_0\to v_3, v_1\to v_2, v_2\to v_3\}$. Then
 \begin{eqnarray*}
 P(G)=R\{v_0, v_1, v_2, v_3, v_0v_3, v_1v_2, v_2v_3, v_1v_2v_3\}
 \end{eqnarray*}
 and the transitive closure of $G$ is a digraph $\overline{G}$  with $ V(\overline{G})=V(G)$  and
  \begin{eqnarray*}
  E(\overline{G})=E(G)\cup \{v_1\to v_3\}.
\end{eqnarray*}


\begin{figure}[!htbp]
 \begin{center}
\begin{tikzpicture}[line width=1.5pt]

\coordinate [label=right:$v_1$]    (A) at (4,4);
 \coordinate [label=left:$v_2$]   (B) at (2,4);
 \coordinate  [label=left:$v_3$]   (C) at (2,2);
\coordinate  [label=right:$v_0$]   (D) at (4,2);

\coordinate[label=left:$G$:] (G) at (1,3);
   \draw [line width=1.5pt]  (A) -- (B);
    \draw [line width=1.5pt]  (B) -- (C);
      \draw [line width=1.5pt]  (D) -- (C);
 \draw[->] (4,4) -- (2.5,4);

\draw[->] (2,4) -- (2,2.5);

\draw[->] (4,2) -- (2.5,2);

\fill (4,4) circle (2.5pt) (2,4) circle (2.5pt) (2,2) circle (2.5pt) (4,2) circle (2.5pt);

\coordinate [label=right:$v_1$]    (A) at (4+6,4);
 \coordinate [label=left:$v_2$]   (B) at (2+6,4);
 \coordinate  [label=left:$v_3$]   (C) at (2+6,2);
\coordinate  [label=right:$v_0$]   (D) at (4+6,2);

\coordinate[label=left:$\overline{G}$:] (H) at (1+6,3);
   \draw [line width=1.5pt]  (A) -- (B);
    \draw [line width=1.5pt]  (B) -- (C);
      \draw [line width=1.5pt]  (D) -- (C);
      \draw [line width=1.5pt]  (A) -- (C);
 \draw[->] (4+6,4) -- (2.5+6,4);
\draw[->] (4+6,4) -- (9,3);
\draw[->] (2+6,4) -- (2+6,2.5);

\draw[->] (4+6,2) -- (2.5+6,2);

\fill (4+6,4) circle (2.5pt) (2+6,4) circle (2.5pt) (2+6,2) circle (2.5pt) (4+6,2) circle (2.5pt);

 \end{tikzpicture}
\end{center}

\caption{Example~\ref{ex-1.1}.}
\end{figure}
Let $f$ be a function on $V(G)$ with $f(v_1)=f(v_0)=0$ and $f(v_2)>0, f(v_3)>0$. Then by Definition~\ref{def-1.1},  $f$ is a discrete Morse function on $G$. However, since $f(v_3)=f(v_0v_3)=f(v_1v_3)$, $f$ is not a discrete Morse function on $\overline{G}$. Hence, there does not exist any discrete Morse
function $\overline{f}$ on $\overline{G}$ such that the restriction of $\overline{f}$ to $G$ equals $f$.
\end{example}
To give a condition for extendability of a Morse function $f$ on $G$ to its transitive closure, we study the property of the discrete Morse function on transitive digraph in the following lemma first. 

\begin{lemma}\label{le-555}
Let $G$ be a transitive digraph and $f: V(G)\longrightarrow [0,+\infty)$ a discrete Morse function on $G$. Then for any vertex $v$ with $D(v)\geq 2$, there exists at most one vertex $w\not=v$ such that $f(w)=0$ and $v\to w$ or $w\to v$ is a directed edge in $E(G)$.
\end{lemma}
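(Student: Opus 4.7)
The plan is to argue by contradiction and reduce everything to a direct application of Definition~\ref{def-1.1}(i) taken at the $0$-path $\alpha=v$. Suppose for contradiction that there exist two distinct vertices $w_1,w_2\in V(G)\setminus\{v\}$ with $f(w_1)=f(w_2)=0$ such that each $w_i$ is joined to $v$ by a directed edge (in some direction). For each $i\in\{1,2\}$ I pick an orientation that is actually present in $E(G)$ and thereby form an allowed $1$-path $\gamma_i$ whose underlying vertex set is $\{v,w_i\}$; if both orientations happen to be present for a particular $w_i$, I pick one arbitrarily, say $\gamma_i=vw_i$.

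The next step is to verify that $\gamma_1$ and $\gamma_2$ jointly witness a failure of Definition~\ref{def-1.1}(i). By (\ref{eq-1.1}) we have $f(\gamma_i)=f(v)+f(w_i)=f(v)=f(\alpha)$. Each $\gamma_i$ contains $v$ together with one extra vertex $w_i$, so removing $w_i$ from $\gamma_i$ yields the $0$-path $v$, which means $\gamma_i>v=\alpha$ in the sense of the ordering recalled just before Definition~\ref{def-1.1}. Finally $\gamma_1\neq\gamma_2$, since $w_1\neq w_2$ forces the two ordered sequences to differ. Therefore the set $\{\gamma^{(1)}>\alpha^{(0)}\mid f(\gamma)=f(\alpha)\}$ has at least two elements, contradicting Definition~\ref{def-1.1}(i), and the lemma follows.

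The one mild subtlety to record is the \emph{mixed} case where, say, $v\to w_1\in E(G)$ while only $w_2\to v\in E(G)$: then $\gamma_1=vw_1$ and $\gamma_2=w_2v$, which are still distinct as ordered sequences and each still satisfies $\gamma_i>v$, so the argument goes through unchanged. I note in passing that the proof itself does not actively use transitivity of $G$ or the hypothesis $D(v)\geq 2$; transitivity is present because the lemma will be applied to the transitive closure $\overline G$ in the extendability criterion (Theorem~\ref{th-666}) that follows, and the degree hypothesis merely ensures the conclusion is not vacuous.
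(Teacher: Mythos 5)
Your proof is correct and is essentially the paper's own argument: the paper likewise argues by contradiction at the $0$-path $\alpha=v$, splitting into the three orientation cases and exhibiting two allowed $1$-paths $\gamma_1,\gamma_2>\alpha$ with $f(\gamma_i)=f(\alpha)$ to contradict Definition~\ref{def-1.1}(i). Your side remark that neither transitivity nor $D(v)\geq 2$ is actually invoked also accurately reflects the paper's proof.
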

\begin{proof}
Suppose to the contrary, there are   three cases to be considered.

{\sc Case~1}. There are two vertices $w_1, w_2\in V(G)$ such that $f(w_1)=f(w_2)=0$ and $v\to w_1, w_2\to v \in E(G)$.

{\sc Case~2}. There are two vertices $w_1, w_2\in V(G)$ such that $f(w_1)=f(w_2)=0$ and $w_1\to v,  w_2 \to v \in E(G)$.

{\sc Case~3}. There are two vertices $w_1, w_2\in V(G)$ such that $f(w_1)=f(w_2)=0$ and $v\to w_1,  v\to w_2 \in E(G)$.

Without loss of generality, we only give the proof of Case 1. Let $\alpha=v$, $\gamma_1=vw_1$ and $\gamma_2=w_2v$. Then $f(\alpha)=f(\gamma_1)=f(\gamma_2)$ and $\gamma_1>\alpha, \gamma_2>\alpha$.  This contradicts that $f$ is a discrete Morse function on $G$. The lemma follows.
\end{proof}



Next, we give a necessary and sufficient condition for a discrete Morse function on a digraph to be extended to its transitive closure in the following theorem.

\begin{theorem}\label{th-666}
Let $G$ be a digraph and $f: V(G)\longrightarrow [0,+\infty)$ a discrete Morse function on $G$.   Then $f$ can be extended to be a Morse function $\overline{f}$ on $\overline{G}$ such that $\overline{f}(v)=f(v)$ for each vertex $v\in E(G)$ if and only if Condition ($*$) is satisfied.
\end{theorem}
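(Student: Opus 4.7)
I will prove the two implications separately. The central tool is the \emph{lifting} of allowed elementary paths from $\overline{G}$ to $G$ furnished by Remark~\ref{re-11}: every edge $u\to v$ in $E(\overline{G})\setminus E(G)$ is witnessed by an allowed elementary path $u w_1\cdots w_k v$ on $G$, so concatenating such witnesses along an allowed elementary path $\alpha=u_0 u_1\cdots u_n$ of $\overline{G}$ produces an allowed elementary path $\tilde\alpha$ on $G$ containing every $u_i$ as a vertex; consecutive vertices of $\tilde\alpha$ remain distinct because each witness is itself allowed.

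\textbf{Necessity.} Assume $f$ extends to a Morse function $\overline f$ on $\overline G$ with $\overline f|_{V(G)}=f$, and suppose for contradiction that Condition~($*$) fails at some $v\in V(G)$: there are two distinct zero vertices $w_1\neq w_2$ of $f$, each lying on an allowed elementary path of $G$ starting or ending at $v$. By transitivity of $\overline G$, the prefix or suffix of the corresponding $G$-path yields either $v\to w_i$ or $w_i\to v$ in $E(\overline G)$, so both $w_1,w_2$ are adjacent to $v$ in $\overline G$. If $v\notin\{w_1,w_2\}$, then Lemma~\ref{le-555} applied to the transitive digraph $\overline G$ with Morse function $\overline f$ gives a direct contradiction. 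If $v=w_1$, then $v$ and $w_2$ are both zeros of $\overline f$ lying on the allowed $1$-path $v w_2$ or $w_2 v$ of $\overline G$, contradicting Lemma~\ref{le-1.13}.

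\textbf{Sufficiency.} Assume Condition~($*$) holds. Since $V(\overline G)=V(G)$, set $\overline f:=f$ and verify both Morse conditions of Definition~\ref{def-1.1} for $\overline f$ on $\overline G$, fixing an arbitrary allowed elementary path $\alpha=u_0 u_1\cdots u_n$ of $\overline G$ with lift $\tilde\alpha$ as above. For~(ii), any face $\beta<\alpha$ arises by deleting some $u_i$, so by~(\ref{eq-1.1}) we have $\overline f(\beta)=\overline f(\alpha)$ iff $f(u_i)=0$; applying Lemma~\ref{le-1.13} to $f$ on the $G$-path $\tilde\alpha$ forces at most one such index. For~(i), any cover $\gamma>\alpha$ arises by inserting a vertex $w$ at a position $i$ with $f(w)=0$; suppose two distinct such covers exist, with data $(w_1,i_1)$ and $(w_2,i_2)$. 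If $w_1=w_2=w$ with $i_1<i_2$, the two insertion constraints combine with the segment $u_{i_1}\to\cdots\to u_{i_2-1}$ (or its truncation at the boundary) into a directed loop through $w$ in $\overline G$; lifting this loop to $G$ via Remark~\ref{re-11} yields a $G$-loop containing the zero vertex $w$, contradicting Lemma~\ref{le-1.14}. If $w_1\neq w_2$, take $v:=u_0$: using transitivity, each $w_j$ lies on an allowed elementary path in $G$ starting or ending at $v$, obtained by concatenating the witness for the insertion-edge $v\to w_j$ or $w_j\to v$ with a lift of the appropriate initial segment of $\alpha$, which contradicts Condition~($*$) at $v$.

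\textbf{Main obstacle.} The delicate bookkeeping lies in verifying condition~(i) of the sufficiency direction. I will need to handle the boundary-insertion cases $i_j\in\{0,n+1\}$, confirm that the concatenated lifts never produce equal consecutive vertices or accidentally destroy the loop structure in the $w_1=w_2$ case, and treat the degenerate case $n=0$ separately (where $\alpha$ is a single vertex and only boundary insertions are available). The necessity direction and condition~(ii) of the sufficiency direction are comparatively routine once Remark~\ref{re-11} and Lemmas~\ref{le-1.13}, \ref{le-1.14}, and~\ref{le-555} are in hand.
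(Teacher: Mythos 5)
Your proposal is correct and follows essentially the same route as the paper: necessity via transitivity of $\overline{G}$ together with Lemmas~\ref{le-1.13} and~\ref{le-555}, and sufficiency by lifting allowed paths of $\overline{G}$ back to $G$ via Remark~\ref{re-11} and ruling out two distinct zero-vertex insertions. The only (inessential) divergence is in the subcase of two insertions of the same zero vertex at different positions, where you close up a directed loop and invoke Lemma~\ref{le-1.14}, while the paper instead notes that the segment between the two insertion points lifts to a $G$-path carrying two zeros and applies Lemma~\ref{le-1.13}; both arguments are valid.
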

\begin{proof}
Suppose the discrete Morse function $f$ on $G$ can be extended to be a discrete Morse function on $\overline{G}$. Let $\alpha=v_0\cdots v_n$ be an allowed elementary path on $G$ with $v_0=v$ or $v_n=v$. By Lemma~\ref{le-6.2.2}, for each $v_k$ ($0\leq k\leq n$) such that $v_k\not=v$,  $v\to v_k$ or $v_k\to v$ is a directed edge in $E(\overline{G})$. Then by Lemma~\ref{le-1.13} and Lemma~\ref{le-555}, $f$ satisfies Condition ($*$) obviously.


On the other hand, suppose $f$ satisfies Condition ($*$). Let $\alpha=v_0v_1\cdots v_n$ be an allowed elementary path on $\overline{G}$. Firstly, we assert that  there exists at most one index $i$ ($0\leq i\leq n$) such that  ${f}(v_i)=0$ and  $d_i(\alpha)$ is an allowed elementary $(n-1)$-path on $\overline{G}$.

{\sc Case~1}.   $\alpha\in P(G)$. Then by Lemma~\ref{le-1.13}, the assertion follows. 

{\sc Case~2}.  $\alpha\not\in P(G)$.   Then there exists a directed edge $v_{i}v_{i+1}\in E(\overline{G})\setminus E(G)$ for some $0\leq i\leq {n-1}$. Hence, by Remark~\ref{re-11}, there exists an allowed elementary path $\alpha'=v_0\cdots v_i w_1\cdots w_k v_{i+1}\cdots v_n$ on $G$ with $k\geq 1$ and $w_1, \cdots, w_k\in V(G)$. Hence, by  Lemma~\ref{le-1.13}, the assertion follows.

Secondly, we assert that there exists at most one allowed  elementary $(n+1)$-path $\alpha'=v_0\cdots v_juv_{j+1}\cdots v_n$ on $\overline{G}$ with $f(u)=0$, $u\in V(G)$. Suppose to the contrary, we consider the following two cases.

{\sc Case~3}.  
$\alpha''=v_0\cdots v_i wv_{i+1}\cdots v_n$  is another  allowed elementary $(n+1)$-path on $\overline{G}$ with $f(w)=0$ and $i\not=j$. Without loss of generality, $j>i$. Then by Remark~\ref{re-11}, there is an allowed elementary path with $w$ as the  starting point and $u$ the ending point on $G$. This contradicts Lemma~\ref{le-1.13}.

{\sc Case~4}.  
$\alpha''=v_0\cdots v_i wv_{i+1}\cdots v_n$  is another  allowed elementary $(n+1)$-path on $\overline{G}$ with $f(w)=0$ and $i=j$. Then $u\not=w$. Hence, by Remark~\ref{re-11}, there are two allowed elementary paths on $G$ with $v_i$ as the starting  point and $u, w$ as the ending point respectively, or with $u, w$ as  the starting point respectively  and $v_{i+1}$ as the ending point. This contradicts Condition ($*$).

Summarising above cases,  $f$ can be extended to  be a Morse function $\overline{f}$ on $\overline{G}$.

Therefore, the theorem is proved.
\end{proof}


\subsection{Quasi-isomorphism, Discrete Morse Theory for Digraphs}
Let $f$ be a discrete Morse function on  digraph $G$ and $\overline{f}$ the extension of $f$ on the transitive closure $\overline{G}$ of $G$. Let $V_f=\mathrm{grad}f$ and $\overline{V}=\mathrm{grad}\overline{f}$ be discrete gradient vector fields on $G$ and $\overline{G}$  respectively. Generally, $\overline{V}\mid_{P(G)}\not=V_f$.

\begin{example}\label{ex-55}
Let $G$ be a digraph with $V(G)=\{v_0,v_1,v_2, v_3\}$ and $E(G)=\{v_0\to v_1, v_1\to v_2, v_2\to v_3, v_0\to v_3\}$. Then  the transitive closure of $G$ is a digraph $\overline{G}$  with  $V(\overline{G})=V(G)$  and
 \begin{eqnarray*}
  E(\overline{G})=E(G)\cup \{v_0\to v_2, v_1\to v_3\}.
\end{eqnarray*}


\begin{figure}[!htbp]
 \begin{center}
\begin{tikzpicture}[line width=1.5pt]

\coordinate [label=left:$v_0$]    (A) at (2,2);
 \coordinate [label=left:$v_1$]   (B) at (2,4);
 \coordinate  [label=right:$v_2$]   (C) at (4,4);
 \coordinate  [label=right:$v_3$]   (D) at (4,2);

\coordinate[label=left:$G$:] (G) at (1,3);
   \draw [line width=1.5pt]  (A) -- (B);
    \draw [line width=1.5pt]  (B) -- (C);
    \draw [line width=1.5pt]  (C) -- (D);
      \draw [line width=1.5pt]  (A) -- (D);
 \draw[->] (2,2) -- (2,3);

\draw[->] (2,4) -- (3,4);

\draw[->] (4,4) -- (4,3);
\draw[->] (2,2) -- (3,2);

\fill (2,2) circle (2.5pt) (2,4) circle (2.5pt) (4,4) circle (2.5pt) (4,2) circle (2.5pt);

\coordinate [label=left:$v_0$]    (A) at (2+6,2);
 \coordinate [label=left:$v_1$]   (B) at (2+6,4);
 \coordinate  [label=right:$v_2$]   (C) at (4+6,4);
\coordinate  [label=right:$v_3$]   (D) at (4+6,2);

\coordinate[label=left:$\overline{G}$:] (H) at (1+6,3);
   \draw [line width=1.5pt]  (A) -- (B);
       \draw [line width=1.5pt]  (B) -- (C);
    \draw [line width=1.5pt]  (C) -- (D);
      \draw [line width=1.5pt]  (A) -- (D);
        \draw [line width=1.5pt]  (B) -- (D);
          \draw [line width=1.5pt]  (A) -- (C);
 \draw[->] (2+6,2) -- (2+6,3);

\draw[->] (2+6,4) -- (3+6,4);

\draw[->] (4+6,4) -- (4+6,3);
\draw[->] (2+6,2) -- (3+6,2);
\draw[->] (2+6,4) -- (9.5,2.5);
\draw[->] (2+6,2) -- (9.5,3.5);
\fill (2+6,2) circle (2.5pt) (2+6,4) circle (2.5pt) (4+6,4) circle (2.5pt) (4+6,2) circle (2.5pt);

 \end{tikzpicture}
\end{center}

\caption{Example~\ref{ex-55}.}
\end{figure}
Let $f$ be a function on $V(G)$ with $f(v_2)=0$, $f(v_0)>0, f(v_1)>0$, and $f(3)>0$. Then $f$ is a discrete Morse function on $G$ satisfying Condition ($*$).  By Theorem~\ref{th-666}, $f$ can be extended to be a Morse function $\overline{f}$ on $\overline{G}$ such that $\overline{f}(v)=f(v)$ for each vertex $v\in V(G)$.

Let $\alpha=v_0v_3\in P(G)\subseteq P(\overline{G})$. Since there exists no allowed elementary path $\gamma\in P(G)$  such that $\gamma>\alpha$ and $f(\gamma)=f(\alpha)$. Then $V_f(\alpha)=0$.  Since  $\overline{f}(v_0v_2v_3)=\overline{f}(\alpha)$ and $\partial(v_0v_2v_3)=v_2v_3-v_0v_3+v_0v_2$,  then $\overline{V}(\alpha)=v_0v_2v_3\in P(\overline{G})$. Hence the restriction of $\overline{V}$ on P(G) may not be $V_f$.
\end{example}
\smallskip
Denote
\begin{eqnarray*}
\overline{\Phi}=\mathrm{Id}+\partial \overline{V}+\overline{V}\partial,
\end{eqnarray*}
as the discrete gradient flow of $\overline{G}$. Similar with the proof of  \cite[Theorem~6.3, Theorem~6.4]{forman1}, the main properties of $\overline{V}$ and $\overline{\Phi}$  are contained in the following theorem.

\begin{theorem}\label{th-6.3}(cf. \cite[Theorem~6.3, Theorem~6.4]{forman1})
\begin{enumerate}[(i).]
\item
$\overline{V}\circ {\overline{V}}=0$;
\item
$\sharp\{\beta^{(n-1)}\mid{\overline{V}(\beta)=\pm{\alpha}}\}\leq 1$ for any $\alpha\in P(\overline{G})$;
\item
$\alpha$ is critical $\Longleftrightarrow \{\alpha\not \in \mathrm{Image}(\overline{V})~\text{and}~ \overline{V}(\alpha)=0\}$ for any $\alpha\in P(\overline{G})$;
\item
$\overline{\Phi}\partial=\partial\overline{\Phi}$.
\end{enumerate}
\end{theorem}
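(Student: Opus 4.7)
The plan is to follow the template of Forman's proof from \cite[Theorem~6.3, Theorem~6.4]{forman1}, translated to the path-complex setting, using the fact that by Theorem~\ref{th-666} the extension $\overline{f}$ is a discrete Morse function on $\overline{G}$; this makes Definition~\ref{def-1.1} and Lemma~\ref{le-1.13} available directly on $\overline{G}$. Parts (i)--(iii) reduce to combinatorial statements about zero-value vertices inside an allowed elementary path in $\overline{G}$, while part (iv) is a purely algebraic manipulation.

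For part (i), I would argue by contradiction on $\alpha\in P(\overline{G})$. If $\overline{V}(\alpha)\neq 0$, then there is a unique $\gamma^{(n+1)}>\alpha$ with $\overline{f}(\gamma)=\overline{f}(\alpha)$, so the vertex $u$ that $\gamma$ adds to $\alpha$ satisfies $\overline{f}(u)=0$. If also $\overline{V}(\gamma)\neq 0$, then some $\delta^{(n+2)}>\gamma$ has $\overline{f}(\delta)=\overline{f}(\gamma)$, and the extra vertex $w$ of $\delta$ relative to $\gamma$ satisfies $\overline{f}(w)=0$. Since $u$ already appears in $\gamma$ while $w$ does not, we have $u\neq w$, so $\delta$ is an allowed elementary path in $\overline{G}$ containing two distinct zero vertices, contradicting Lemma~\ref{le-1.13} applied to $\overline{G}$. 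Hence $\overline{V}\circ\overline{V}=0$.

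For part (ii), by construction $\overline{V}(\beta)=\pm\alpha$ forces $\beta<\alpha$ with $\overline{f}(\beta)=\overline{f}(\alpha)$; conversely, for any such $\beta$, Definition~\ref{def-1.1}(i) applied to $\beta$ guarantees that $\alpha$ is the unique codimension-$1$ coface of $\beta$ at the same $\overline{f}$-level, which forces $\overline{V}(\beta)=-\langle\partial\alpha,\beta\rangle\alpha=\pm\alpha$. The count in (ii) is therefore equal to $\#\{\beta^{(n-1)}<\alpha\mid\overline{f}(\beta)=\overline{f}(\alpha)\}$, bounded by $1$ via Definition~\ref{def-1.1}(ii) applied to $\alpha$. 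Part (iii) is then pure bookkeeping: the two strict inequalities in the definition of criticality translate, respectively, into $\overline{V}(\alpha)=0$ (no strict coface at the same $\overline{f}$-level) and $\alpha\notin\mathrm{Image}(\overline{V})$ (no strict face at the same level, via the equivalence extracted in (ii)).

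Part (iv) is purely formal: since $\partial^2=0$, one computes
\begin{eqnarray*}
\overline{\Phi}\partial=\partial+\partial\overline{V}\partial+\overline{V}\partial^2=\partial+\partial\overline{V}\partial=\partial^2\overline{V}+\partial+\partial\overline{V}\partial=\partial\overline{\Phi}.
\end{eqnarray*}
The only step that demands a little care is part (i), where I would want to make sure that the added vertices $u$ and $w$ are genuinely distinct and appear simultaneously as vertices of one and the same allowed elementary path $\delta$ in $\overline{G}$; once this is in hand, Lemma~\ref{le-1.13} delivers the contradiction and the rest of the theorem falls out with essentially no further work.
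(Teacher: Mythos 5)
Your proposal is correct and follows essentially the same route as the paper: parts (i)--(iii) via Lemma~\ref{le-1.13} and the two counting conditions of Definition~\ref{def-1.1}, and part (iv) by the formal computation using $\partial^2=0$. The only cosmetic remark is that in part (i) you need not worry about whether $u\neq w$ as vertices, since Lemma~\ref{le-1.13} bounds the number of \emph{indices} at which $f$ vanishes along an allowed elementary path, so two zero-valued positions in $\delta$ already yield the contradiction.
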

\begin{proof}
The simple proof of the theorem is as follows.
\begin{enumerate}[(i).]
\item
For any allowed elementary path $\beta$ on $\overline{G}$, if $\overline{V}(\beta)=\pm\alpha$, then $\beta<\alpha$ and $\overline{f}(\beta)=\overline{f}(\alpha)$. By Lemma~\ref{le-1.13}, there is no allowed elementary path $\gamma$ on $\overline{G}$ such that $\gamma>\alpha$ and $\overline{f}(\gamma)=\overline{f}(\alpha)$. Hence $\overline{V}\circ\overline{V}(\beta)=0$. (i) is proved.
\item
If $\overline{V}(\beta)=\pm{\alpha}$, then $\beta<\alpha$ and $\overline{f}(\beta)=\overline{f}(\alpha)$. Hence by Definition~\ref{def-1.1}(ii), (ii) is proved.
\item
By Definition~\ref{def-1.1},
\begin{eqnarray*}
\alpha\text{ is critical}&\Longleftrightarrow& \text{for any allowed elementary path }\gamma>\alpha, \overline{f}(\gamma)>\overline{f}(\alpha)\text{ and }\\
 &&\text{for any allowed elementary path } \beta<\alpha, \overline{f}(\beta)<\overline{f}(\alpha).
 \end{eqnarray*}
That is equivalent to there is no  allowed elementary path $\gamma$  on $\overline{G}$ such that $\gamma>\alpha$ and $\overline{f}(\gamma)=\overline{f}(\alpha)$, and no  allowed elementary path $\beta$ on $\overline{G}$ such that $\beta<\alpha$  and $\overline{f}(\beta)=\overline{f}(\alpha)$, which implies (iii).
\item
\begin{eqnarray*}
\partial \overline{\Phi}&=&\partial(\mathrm{Id}+\partial \overline{V}+\overline{V}\partial)=\partial+\partial\overline{V}\partial,\\
\overline{\Phi}\partial&=&(\mathrm{Id}+\partial \overline{V}+\overline{V}\partial)\partial=\partial+\partial\overline{V}\partial.\\
\end{eqnarray*}
\end{enumerate}
\end{proof}
\smallskip
Let
\begin{eqnarray*}
P_{*}^{\overline{\Phi}}(\overline{G})=\{\sum_i a_i\alpha_i\in P_*(\overline{G})\mid {\overline{\Phi}(\sum_i a_i\alpha_i)=\sum_i a_i\alpha_i}, a_i\in R\}.
\end{eqnarray*}
By Theorem~\ref{th-6.3} (iv), the boundary operator $\partial$ maps  $P_{n}^{\overline{\Phi}}(\overline{G})$ to $P_{n-1}^{\overline{\Phi}}(\overline{G})$. Thus
$\{P_{*}^{\overline{\Phi}}(\overline{G}),\partial_*\}$ is a sub-chain complex of $\{P_*(\overline{G}),\partial_*\}$ consisting of all $\overline{\Phi}$-invariant chains,  called the Morse complex. Moreover, by Theorem~\ref{th-6.3} and \cite[Theorem~7.2]{forman1},  we have that
\begin{eqnarray*}
\overline{\Phi}^{N}=\overline{\Phi}^{N+1}=\cdots=\overline{\Phi}^{\infty}
\end{eqnarray*}
for some sufficiently large positive integer  $N$, where  $\overline{\Phi}^{\infty}=\lim\limits_{N \to \infty}\overline{\Phi}^{N}$.

To give the discrete Morse theory for digraphs, we first prove a quasi-isomorphism of chain complexes.

\begin{theorem}\label{th-6.11}
Suppose $\Omega_*(G)$ is $\overline{V}$-invariant (that is, $\overline{V}(\Omega_n(G))\subseteq \Omega_{n+1}(G)$ for each $n\geq 0$).  There is a quasi-isomorphism
\begin{eqnarray*}
\Omega_*(G)\longrightarrow \Omega_*(G)\cap P_*^{\overline{\Phi}}(\overline{G}).
\end{eqnarray*}
\end{theorem}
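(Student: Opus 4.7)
The plan is to propose that the quasi-isomorphism is the restriction of the stabilization map $\overline{\Phi}^{\infty}$ to $\Omega_*(G)$, and to prove the stronger statement that, together with the inclusion $\iota : \Omega_*(G) \cap P_*^{\overline{\Phi}}(\overline{G}) \hookrightarrow \Omega_*(G)$, it forms a chain-homotopy equivalence.

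First I would check that $\overline{\Phi}^{\infty}$ sends $\Omega_*(G)$ into $\Omega_*(G) \cap P_*^{\overline{\Phi}}(\overline{G})$. Landing in $P_*^{\overline{\Phi}}(\overline{G})$ is immediate from the definition of the stabilization. For $\Omega_*(G)$-invariance, since $\overline{\Phi} = \mathrm{Id} + \partial \overline{V} + \overline{V}\partial$, the hypothesis that $\overline{V}$ preserves $\Omega_*(G)$ combined with the fact that $\partial$ preserves $\Omega_*(G)$ (it is a chain subcomplex of $P_*(\overline{G})$) forces $\overline{\Phi}$, and therefore every iterate $\overline{\Phi}^k$ and the limit $\overline{\Phi}^{\infty}$, to preserve $\Omega_*(G)$ as well. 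That the restriction is a chain map is immediate from Theorem~\ref{th-6.3}(iv).

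Next I would produce the homotopy inverse. The composition $\overline{\Phi}^{\infty}\circ \iota$ is the identity, because on $P_*^{\overline{\Phi}}(\overline{G})$ the operator $\overline{\Phi}$ acts as the identity. The other composition $\iota\circ \overline{\Phi}^{\infty}$ equals $\overline{\Phi}^{\infty}$ viewed as a self-map of $\Omega_*(G)$, and I would exhibit an explicit chain homotopy to $\mathrm{Id}_{\Omega_*(G)}$ by the standard telescoping trick: setting $H_N = \sum_{k=0}^{N-1}\overline{\Phi}^k \overline{V}$, the identity $\overline{\Phi} - \mathrm{Id} = \partial \overline{V} + \overline{V}\partial$ together with Theorem~\ref{th-6.3}(iv) yields
\begin{equation*}
\overline{\Phi}^N - \mathrm{Id} = \sum_{k=0}^{N-1} \overline{\Phi}^k(\partial \overline{V}+\overline{V}\partial) = \partial H_N + H_N \partial.
\end{equation*}
Choosing $N$ so large that $\overline{\Phi}^N = \overline{\Phi}^{\infty}$ (which exists by the stabilization statement following Theorem~\ref{th-6.3}) gives the chain homotopy $\overline{\Phi}^{\infty} - \mathrm{Id} = \partial H_N + H_N \partial$. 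Since a chain-homotopy equivalence is in particular a quasi-isomorphism, this finishes the argument.

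The main obstacle is verifying that the homotopy $H_N$ itself preserves $\Omega_*(G)$, i.e., that $H_N$ restricts to a map $\Omega_n(G) \to \Omega_{n+1}(G)$, so that the chain-homotopy formula is meaningful within the subcomplex $\Omega_*(G)$ rather than only in $P_*(\overline{G})$. This is exactly the place where the hypothesis $\overline{V}(\Omega_*(G)) \subseteq \Omega_{*+1}(G)$ is essential: it guarantees $\overline{V}$ takes $\Omega_*(G)$ into $\Omega_{*+1}(G)$, and combined with the $\Omega_*(G)$-invariance of $\overline{\Phi}$ proved in the first step, each summand $\overline{\Phi}^k\overline{V}$ maps $\Omega_n(G)$ into $\Omega_{n+1}(G)$. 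Without this invariance hypothesis one would only obtain a homotopy in $P_*(\overline{G})$, which would not descend to $\Omega_*(G)$.
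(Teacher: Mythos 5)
Your proposal is correct, and its first half (well-definedness of $\overline{\Phi}^{\infty}\mid_{\Omega_*(G)}$ via $\overline{V}$-invariance, and the identity $\overline{\Phi}^{\infty}\circ\iota=\mathrm{Id}$) coincides with the paper's argument. The second half is genuinely different and in fact stronger. The paper works at the level of homology: it deduces surjectivity of $(\overline{\Phi}^{\infty}\mid_{\Omega_*(G)})_*$ from $\overline{\Phi}^{\infty}\circ\iota=\mathrm{Id}$, and then proves injectivity by hand, taking a cycle $x\in\Omega_n(G)$ with $\overline{\Phi}^{\infty}(x)=\partial y$, expanding $\overline{\Phi}^{N}(x)=(\mathrm{Id}+\partial\overline{V})^{N}(x)=x+\partial L(x)$ with
\begin{equation*}
L=C_{N}^{1}\overline{V}+C_{N}^{2}(\overline{V}\partial\overline{V})+\cdots +C_{N}^{N}(\overline{V}\partial\overline{V}\cdots\partial\overline{V}),
\end{equation*}
and observing that $L(x)\in\Omega_{n+1}(G)$ by the invariance hypothesis, so that $x=\partial(y-L(x))$. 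You instead build the full chain homotopy $H_N=\sum_{k=0}^{N-1}\overline{\Phi}^{k}\overline{V}$ on all chains, verify the telescoping identity $\overline{\Phi}^{N}-\mathrm{Id}=\partial H_N+H_N\partial$ using $\overline{\Phi}\partial=\partial\overline{\Phi}$, and check that $H_N$ preserves $\Omega_*(G)$; this upgrades the conclusion from a quasi-isomorphism to a chain-homotopy equivalence of the subcomplexes. The paper's $L$ is essentially your homotopy specialized to cycles (where the $H_N\partial$ term vanishes), so the two arguments exploit the same underlying mechanism; what yours buys is a cleaner, reusable statement, while the paper's avoids having to discuss the homotopy away from cycles. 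One small point worth making explicit in your write-up: the fact that $\partial$ preserves $\Omega_*(G)$, which you invoke to show $\overline{\Phi}$ preserves $\Omega_*(G)$, follows from $\Omega_n(G)=P_n(G)\cap\partial^{-1}P_{n-1}(G)$ together with $\partial\partial=0$, and is used tacitly in the paper as well.
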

\begin{proof}
By the proof of \cite[Theorem~7.3]{forman1}, we have the following  chain homotopy
\begin{eqnarray}
\overline{\Phi}^{\infty}: P_*(\overline{G})&&\longrightarrow P_*^{\overline{\Phi}}(\overline{G}); \label{eq-s2}\\
\iota: P_*^{\overline{\Phi}}(\overline{G})&&\longrightarrow P_*(\overline{G})\nonumber.
\end{eqnarray}
Here $\iota$ is the canonical inclusion. It is proved in \cite[Theorem~7.3]{forman1} that
\begin{eqnarray}\label{eq-s11}
\overline{\Phi}^{\infty}\circ \iota=\mathrm{Id}
\end{eqnarray}
and
\begin{eqnarray}\label{eq-s112}
\iota\circ \overline{\Phi}^{\infty}\simeq\mathrm{Id}.
\end{eqnarray}

Firstly, we will prove that
\begin{eqnarray}\label{eq-s1}
\overline{\Phi}^{\infty}\mid_{\Omega_*(G)}:  \Omega_*(G)\longrightarrow \Omega_*(G)\cap P_*^{\overline{\Phi}}(\overline{G})
\end{eqnarray}
is well-defined.

For any $x=\sum {a_i \alpha_i}\in \Omega_n(G)\subseteq P_n(\overline{G})$ where $a_i\in R$ and $\alpha_i$ are allowed elementary $n$-paths on $G$. Since $\Omega_*(G)$ is $\overline{V}$-invariant, then $\overline{\Phi}(x)\in \Omega_n(G)$. Hence
\begin{eqnarray*}
\overline{\Phi}^{\infty}(x)\in \Omega_n(G).
\end{eqnarray*}
On the other hand, by  (\ref{eq-s2}),
\begin{eqnarray*}
\overline{\Phi}^{\infty}(x)\in P^{\overline{\Phi}}_*(\overline{G}).
\end{eqnarray*}
Hence (\ref{eq-s1}) is well-defined.

Secondly, by (\ref{eq-s11}) and (\ref{eq-s112}), we have
\begin{eqnarray*}
(\overline{\Phi}^{\infty}\mid_{\Omega_*(G)})\circ (\iota\mid_{\Omega_*(G)\cap P_*^{\overline{\Phi}}(\overline{G})})=\mathrm{Id}.
\end{eqnarray*}
It follows that
\begin{eqnarray*}
(\overline{\Phi}^{\infty}\mid_{\Omega_*(G)})_*\circ (\iota\mid_{\Omega_*(G)\cap P_*^{\overline{\Phi}}(\overline{G})})_*=\mathrm{Id}.
\end{eqnarray*}
and $(\overline{\Phi}^{\infty}\mid_{\Omega_*(G)})_*$  is surjective. Here $(\overline{\Phi}^{\infty}\mid_{\Omega_*(G)})_*$ and $(\iota\mid_{\Omega_*(G)\cap P_*^{\overline{\Phi}}(\overline{G})})_*$ are homomorphisms between homology groups $H_*(\Omega_*(G))$ and $H_*(\Omega_*(G)\cap P_*^{\overline{\Phi}}(\overline{G}))$ induced by morphisms between chain complexes $\Omega_*(G)$ and $\Omega_*(G)\cap P_*^{\overline{\Phi}}(\overline{G})$. It leaves us to show $(\overline{\Phi}^{\infty}\mid_{\Omega_*(G)})_*$ is injective. That is,  for any element $x\in \mathrm{Ker}\partial\mid_{\Omega_n(G)}$, if
\begin{eqnarray*}
(\overline{\Phi}^{\infty}\mid_{\Omega_*(G)})_*(x)
\end{eqnarray*}
is a boundary in $\Omega_{n+1}(G)\cap P_{n+1}^{\overline{\Phi}}(\overline{G})$, then $x$ is a boundary in $\Omega_{n+1}(G)$.

Suppose there exists an element $y\in \Omega_{n+1}(G)\cap P_{n+1}^{\overline{\Phi}}(\overline{G})$ such that $\partial y=\overline{\Phi}^{\infty}(x)$. Since
\begin{eqnarray*}
&&\overline{\Phi}^{\infty}(x)=\overline{\Phi}^{N}(x)\\
&&~~~~~~~~~=(\mathrm{Id}+\partial\overline{V})^{N}(x)\\
&&~~~~~~~~~=\left(C_{N}^{0}(\mathrm{Id})^{N}+C_{N}^{1}\mathrm{Id}^{(N-1)}\partial\overline{V}+C_{N}^{2}\mathrm{Id}^{(N-2)}(\partial\overline{V})^2+\cdots +C_{N}^{N}(\partial\overline{V})^{N}\right)(x)\\
&&~~~~~~~~~=x+\left(C_{N}^{1}\partial\overline{V}+C_{N}^{2}(\partial\overline{V})^2+\cdots +C_{N}^{N}(\partial\overline{V})^{N}\right)(x)\\
&&~~~~~~~~~=x+\partial\left(C_{N}^{1}\overline{V}+C_{N}^{2}(\overline{V}\partial\overline{V})+\cdots +C_{N}^{N}(\overline{V}\partial\overline{V}\cdots\partial\overline{V})\right)(x)
\end{eqnarray*}
and $\Omega_*(G)$ is $\overline{V}$-invariant, then $L(x)\in \Omega_{n+1}(G)$ where
\begin{eqnarray*}
L=C_{N}^{1}\overline{V}+C_{N}^{2}(\overline{V}\partial\overline{V})+\cdots +C_{N}^{N}(\overline{V}\partial\overline{V}\cdots\partial\overline{V}).
\end{eqnarray*}
Hence
\begin{eqnarray*}
\partial y-\partial L(x)=x
\end{eqnarray*}
which implies $x=\partial(y-L(x))$ and $x$ is a boundary in $\Omega_{n+1}(G)$.

The theorem is proved.
\end{proof}

Denote
\begin{eqnarray*}
\Omega_*(G)\cap P_*^{\overline{\Phi}}(\overline{G})=\Omega_*^{\overline{\Phi}}(G).
\end{eqnarray*}
By Theorem~\ref{th-6.11}, we have the discrete Morse theory for digraphs as follows.
\begin{corollary}\label{co-111}
Let $G$ be a digraph and $f$ a discrete Morse function on $G$ satisfying Condition (*). Let $\overline{f}$ be the extension of $f$ on $\overline{G}$ and $\overline{V}=\mathrm{grad}\overline{f}$ the discrete gradient vector field on $\overline{G}$. Suppose $\Omega_*(G)$ is $\overline{V}$-invariant. Then
\begin{eqnarray*}
H_m(G)\cong H_m(\Omega_*^{\overline{\Phi}}(G)), m \geq 0.
\end{eqnarray*}
\end{corollary}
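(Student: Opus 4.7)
The plan is to observe that this corollary is essentially a direct packaging of Theorem~\ref{th-666} and Theorem~\ref{th-6.11} together with the definition of path homology from the introduction, so the proof amounts to checking that all hypotheses line up and invoking the quasi-isomorphism on homology.

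First I would verify that the setup of Theorem~\ref{th-6.11} applies. Condition ($*$) on $f$ guarantees, via Theorem~\ref{th-666}, that the extension $\overline{f}$ is a well-defined discrete Morse function on $\overline{G}$, which in turn makes $\overline{V}=\mathrm{grad}\,\overline{f}$ and the associated flow $\overline{\Phi}$ meaningful objects; the $\overline{V}$-invariance of $\Omega_*(G)$ is assumed outright, so every hypothesis needed to invoke Theorem~\ref{th-6.11} is in place. Second, I would apply Theorem~\ref{th-6.11} to obtain the quasi-isomorphism of chain complexes
\begin{eqnarray*}
\overline{\Phi}^\infty\bigl|_{\Omega_*(G)}:\Omega_*(G)\longrightarrow \Omega_*(G)\cap P_*^{\overline{\Phi}}(\overline{G})=\Omega_*^{\overline{\Phi}}(G).
\end{eqnarray*}

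Passing to homology, a quasi-isomorphism induces an isomorphism of graded $R$-modules:
\begin{eqnarray*}
H_m(\Omega_*(G))\;\overset{\cong}{\longrightarrow}\; H_m(\Omega_*^{\overline{\Phi}}(G)),\qquad m\geq 0.
\end{eqnarray*}
Finally, by the very definition of path homology recalled in the introduction (which identifies $H_m(G;R)$ with $H_m(\{\Omega_n(G),\partial_n|_{\Omega_n(G)}\}_{n\geq 0})$ via the canonical inclusion $\iota:\Omega_*(G)\hookrightarrow\Gamma_*(G)$), one has $H_m(G)\cong H_m(\Omega_*(G))$. Composing the two isomorphisms yields the desired $H_m(G)\cong H_m(\Omega_*^{\overline{\Phi}}(G))$.

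Since the corollary is a formal consequence, there is no real obstacle, only a bookkeeping point worth spelling out: one should emphasize that the quasi-isomorphism in Theorem~\ref{th-6.11} is a genuine chain map between the two chain complexes (and hence induces homomorphisms on $H_*$), and that $\Omega_*^{\overline{\Phi}}(G)$ is closed under $\partial$ because $P_*^{\overline{\Phi}}(\overline{G})$ is (by Theorem~\ref{th-6.3}(iv)) and $\Omega_*(G)$ is as a chain complex in its own right; these two facts together make the intersection a subcomplex, so the homology $H_m(\Omega_*^{\overline{\Phi}}(G))$ on the right-hand side of the claim is well-defined. Once this is noted, the corollary is immediate.
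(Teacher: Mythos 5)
Your proposal is correct and follows exactly the route the paper takes (the paper treats the corollary as an immediate consequence of Theorem~\ref{th-6.11} combined with the identification of $H_m(G)$ with $H_m(\Omega_*(G))$ from the definition of path homology). Your additional remark that $\Omega_*^{\overline{\Phi}}(G)$ is a subcomplex because both $\Omega_*(G)$ and $P_*^{\overline{\Phi}}(\overline{G})$ are closed under $\partial$ is a worthwhile detail the paper leaves implicit.
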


Furthermore, for each $n\geq 0$,  let $\mathrm{Crit}_n(G)$ be the free $R$-module consisting of all the formal linear combinations of critical $n$-paths on $G$.  Then $\mathrm{Crit}_n(G)$ is a   sub-$R$-module of $P_n(G)$.  By \cite[Theorem~8.2]{forman1}, there is an isomorphism of graded $R$-modules
\begin{eqnarray*}\label{eq-8.2}
\overline{\Phi}^{\infty}\mid_{\mathrm{Crit}_*(\overline{G})}: \mathrm{Crit}_*(\overline{G})\longrightarrow P_*^{\overline{\Phi}}(\overline{G}).
\end{eqnarray*}
Hence, by Corollary~\ref{co-111}, we have that
\begin{theorem}\label{th-8.3}
Let $G$ be a digraph and $f$ a discrete Morse function on $G$ satisfying Condition (*). Let $\overline{f}$ be the extension of $f$ on $\overline{G}$ and $\overline{V}=\mathrm{grad}\overline{f}$ the discrete gradient vector field on $\overline{G}$. Suppose $\Omega_*(G)$ is $\overline{V}$-invariant. Then
\begin{eqnarray}\label{eq-2.17}
H_m(G)\cong H_m(\{\Omega_n(G)\cap\overline{\Phi}^{\infty}(\mathrm{Crit}_n(\overline{G})), \partial_n\}_{n\geq 0}).
\end{eqnarray}
\end{theorem}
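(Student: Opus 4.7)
The plan is to deduce this directly from Corollary~\ref{co-111} together with the module-level identification of $\overline{\Phi}$-invariant chains with the image of the critical chains under $\overline{\Phi}^{\infty}$. Corollary~\ref{co-111} already yields $H_m(G)\cong H_m(\Omega_*^{\overline{\Phi}}(G))$ under our hypotheses, so all that is needed is to show that the chain complex $\{\Omega_n(G)\cap\overline{\Phi}^{\infty}(\mathrm{Crit}_n(\overline{G})),\partial_n\}_{n\geq 0}$ appearing on the right of (\ref{eq-2.17}) coincides with $\Omega_*^{\overline{\Phi}}(G)=\Omega_*(G)\cap P_*^{\overline{\Phi}}(\overline{G})$ as a chain complex.

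For the module-level identification I would invoke \cite[Theorem~8.2]{forman1} (recalled just before the theorem): the restriction $\overline{\Phi}^{\infty}\mid_{\mathrm{Crit}_*(\overline{G})}\colon \mathrm{Crit}_*(\overline{G})\longrightarrow P_*^{\overline{\Phi}}(\overline{G})$ is an isomorphism of graded $R$-modules. In particular it is surjective, so $\overline{\Phi}^{\infty}(\mathrm{Crit}_n(\overline{G}))=P_n^{\overline{\Phi}}(\overline{G})$ as submodules of $P_n(\overline{G})$ for every $n\geq 0$. Intersecting with $\Omega_n(G)$ gives
\begin{eqnarray*}
\Omega_n(G)\cap\overline{\Phi}^{\infty}(\mathrm{Crit}_n(\overline{G}))=\Omega_n(G)\cap P_n^{\overline{\Phi}}(\overline{G})=\Omega_n^{\overline{\Phi}}(G).
\end{eqnarray*}

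Next I would confirm that the chain complex structure carries over. The boundary $\partial_n$ restricts to $\Omega_*(G)$ by the very definition of $\Omega_*(G)$, and by Theorem~\ref{th-6.3}(iv) it commutes with $\overline{\Phi}$, hence with every power $\overline{\Phi}^N$ and with the stabilization $\overline{\Phi}^{\infty}$. Consequently $\partial_n$ sends $P_n^{\overline{\Phi}}(\overline{G})$ into $P_{n-1}^{\overline{\Phi}}(\overline{G})$, and therefore preserves the intersection. Thus the two chain complexes agree on the nose, and applying Corollary~\ref{co-111} yields the desired isomorphism (\ref{eq-2.17}).

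The argument is essentially formal once Corollary~\ref{co-111} has been established; the only point worth checking is that Forman's identification $\overline{\Phi}^{\infty}\mid_{\mathrm{Crit}_*}\cong P_*^{\overline{\Phi}}$ transfers unchanged from the simplicial-complex setting of \cite{forman1} to the path-complex setting of $\overline{G}$. This transfer rests on parts (i)--(iii) of Theorem~\ref{th-6.3}, which have already been verified for $\overline{V}=\mathrm{grad}\,\overline{f}$, so no genuine obstacle arises and the theorem follows.
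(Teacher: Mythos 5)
Your proposal is correct and follows essentially the same route as the paper: the paper likewise derives Theorem~\ref{th-8.3} by combining Corollary~\ref{co-111} with Forman's isomorphism $\overline{\Phi}^{\infty}\mid_{\mathrm{Crit}_*(\overline{G})}:\mathrm{Crit}_*(\overline{G})\to P_*^{\overline{\Phi}}(\overline{G})$ from \cite[Theorem~8.2]{forman1}, so that $\Omega_n(G)\cap\overline{\Phi}^{\infty}(\mathrm{Crit}_n(\overline{G}))=\Omega_n^{\overline{\Phi}}(G)$. Your additional check that $\partial_n$ preserves the intersection (via Theorem~\ref{th-6.3}(iv)) is a detail the paper leaves implicit but does not change the argument.
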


\begin{example}\label{ex-2.17}

Consider the following digraph $G$ and its transitive closure $\overline{G}$. Then
\begin{eqnarray*}
\Omega_*(G)=R(v_0, v_1, v_2, v_3, v_0v_1, v_1v_2, v_2v_3, v_0v_3)
\end{eqnarray*}

Let $f: V(G)\longrightarrow [0, +\infty)$ be  a function on $G$ with $f(v_0)=0$ and $f(v_i)>0, 0<i\leq 3$. Then $f$ is  a discrete Morse function on $G$ satisfying Condition ($*$). By Theorem~\ref{th-666}, $f$ can be extended to be  a Morse function $\overline{f}$ on $\overline{G}$ such that $\overline{f}(v) = f(v)$ for all vertices $v\in V(\overline{G})$.
\begin{figure}[!htbp]
 \begin{center}
\begin{tikzpicture}[line width=1.5pt]

\coordinate [label=left:$v_0$]    (A) at (2,2);
 \coordinate [label=left:$v_1$]   (B) at (2,4);
 \coordinate  [label=right:$v_2$]   (C) at (4,4);
\coordinate  [label=right:$v_3$]   (D) at (4,2);
\fill (2,2) circle (2.5pt);
\fill (2,4) circle (2.5pt);
\fill (4,4) circle (2.5pt);
\fill (4,2) circle (2.5pt);

\coordinate[label=left:$G$:] (G) at (1,3);
\draw [line width=1.5pt]  (A) -- (B);
\draw [line width=1.5pt]  (B) -- (C);
\draw [line width=1.5pt]   (C) -- (D);
\draw [line width=1.5pt]  (A) -- (D);
 \draw[->] (2,2) -- (2,3);

\draw[->] (2,4) -- (3,4);

\draw[->] (4,4) -- (4,3);

\draw[->] (2,2) -- (3,2);

\coordinate [label=left:$v_0$]    (A) at (2+6,2);
 \coordinate [label=left:$v_1$]   (B) at (2+6,4);
 \coordinate  [label=right:$v_2$]   (C) at (4+6,4);
\coordinate  [label=right:$v_3$]   (D) at (4+6,2);
\fill (2+6,2) circle (2.5pt);
\fill (2+6,4) circle (2.5pt);
\fill (4+6,4) circle (2.5pt);
\fill (4+6,2) circle (2.5pt);

\coordinate[label=left:$\overline{G}$:] (H) at (7,3);
\draw [line width=1.5pt]  (A) -- (B);
\draw [line width=1.5pt]  (A) -- (C);
\draw [line width=1.5pt]  (B) -- (D);
\draw [line width=1.5pt]  (B) -- (C);
\draw [line width=1.5pt]   (C) -- (D);
\draw [line width=1.5pt]  (A) -- (D);
 \draw[->] (2+6,2) -- (2+6,3);
 \draw[->] (2+6,2) -- (9.5,3.5);

\draw[->] (2+6,4) -- (3+6,4);
\draw[->] (2+6,4) -- (9.5,2.5);

\draw[->] (4+6,4) -- (4+6,3);

\draw[->] (2+6,2) -- (3+6,2);

 \end{tikzpicture}
\end{center}
\caption{Example~\ref{ex-2.17}.}
\end{figure}


Since $\overline{f}(v_0)=0$ and $v_0\to v_i$ ($i\not=0$) are directed edges on $\overline{G}$, all allowed elementary paths on $\overline{G}$ except for the $0$-path $\{v_0\}$ are not critical. Hence,
\begin{eqnarray*}\label{eq-e33}
\mathrm{Crit}_*(\overline{G})=R(v_0).
\end{eqnarray*}

Let $\overline{V}=\mathrm{grad}\overline{f}$ be the discrete gradient vector field on $\overline{G}$. Then
\begin{eqnarray}
&& \overline{V}(v_1)=-v_0v_1, ~~~~~\overline{V}(v_2)=-v_0v_2,~~~~~\overline{V}(v_3)=-v_0v_3,\nonumber\\
&& \overline{V}(v_1v_3)=-v_0v_1v_3,\overline{V}(v_2v_3)=-v_0v_2v_3,\overline{V}(v_1v_2)=-v_0v_1v_2 \label{eq-t3}\\
&& \overline{V}(\alpha)=0 ~ \text{for any other allowed elementary path}~\alpha \text{ on}~ \overline{G},\nonumber
\end{eqnarray}
in which (\ref{eq-t3}) implies that $\Omega_*(G)$  is not  $\overline{V}-\mathrm{invariant}$.

Let $\overline \Phi=\mathrm{Id}+ \partial \overline V+\overline V\partial$  be the discrete gradient flow of $\overline{G}$. Then
\begin{eqnarray*}
&&\overline \Phi(v_0)=v_0,~~~~\overline \Phi(v_1)=v_0,\\
&&\overline \Phi(v_2)=v_0,~~~~\overline \Phi(v_3)=v_0,\\
&&\overline \Phi(\alpha)=0 ~ \text{for any other allowed elementary path}~\alpha \text{ on}~ \overline{G}.
\end{eqnarray*}
By calculate directly, we have that $\overline \Phi^\infty=\overline \Phi$. Then
\begin{eqnarray*}
&&\overline{\Phi}^{\infty}(\mathrm{Crit}_*(\overline{G}))=R(v_0),\\
&&\Omega_*(G)\cap\overline{\Phi}^{\infty}(\mathrm{Crit}_*(\overline{G}))=R(v_0).
\end{eqnarray*}
Hence,
\begin{eqnarray*}
&&H_0(\Omega_*(G)\cap\overline{\Phi}^{\infty}(\mathrm{Crit}_*(\overline{G})))\cong  R,\\
&&H_m(\Omega_*(G)\cap\overline{\Phi}^{\infty}(\mathrm{Crit}_*(\overline{G})))=0~\text{for } m>0.
\end{eqnarray*}

By \cite[Proposition~4.7]{9}, $H_1(G)\cong R$ and by \cite[Theorem~4.6]{9}, $H_1(\overline{G})=0$.
\begin{eqnarray*}
&&H_1(G)\not=H_1(\Omega_*(G)\cap\overline{\Phi}^{\infty}(\mathrm{Crit}_*(\overline{G}))),\\
&&H_1(G)\not=H_1(\overline{G}).
\end{eqnarray*}
\end{example}

\begin{remark}\label{re-s3}
If the condition that  $\Omega_*(G)$ is  $\overline{V}-\text{invariant}$ in  Theorem~\ref{th-8.3} does not hold  for some digraphs, there may be no isomorphism of homology groups given in (\ref{eq-2.17}).
\end{remark}

\begin{remark}\label{re-2.17}
In general, the homology groups of a digraph $G$ and its transitive closure $\overline{G}$ are different (in the sense of isomorphism).
\end{remark}

\section{Digraphs Satisfying Condition ($*$)}

In this section, we  will show that there are quite general digraphs on which Morse functions satisfying Condition ($*$) can be defined, and give some examples to illustrate Theorem~\ref{th-8.3}.

Inspired by  Lemma~\ref{le-1.14}, we give the following theorem.

\begin{theorem}\label{th-33}
Let $G$ be a digraph and  $f: V(G)\longrightarrow [0, +\infty)$ a function on $G$ given by

\begin{eqnarray*}\label{eq-17.2}
f(v)=\left\{
\begin{array}{cc}
0, & \text{ if  } v=v', \\
\not=0, & \text{ if } v\not=v'.
\end{array}
\right.
\end{eqnarray*}
Here $v'\in V(G)$ does not belong to any directed loop on $G$. Then $f$ is  a discrete Morse function on $G$ satisfying Condition ($*$).
\end{theorem}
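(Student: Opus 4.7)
The plan is to verify directly the two conditions (i) and (ii) of Definition~\ref{def-1.1} for the function $f$, and then observe that Condition~($*$) holds trivially. The whole argument rests on the additivity formula~(\ref{eq-1.1}) together with the hypothesis that $v'$ lies on no directed loop of $G$.

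Condition~($*$) is immediate because, by construction, $v'$ is the \emph{unique} zero of $f$; hence no matter which vertex $v\in V(G)$ we fix, the totality of allowed elementary paths starting or ending at $v$ contains at most one zero of $f$, namely $v'$ itself.

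To verify (i), fix an allowed path $\alpha=v_0\cdots v_n$ and suppose $\gamma>\alpha$ satisfies $f(\gamma)=f(\alpha)$. Formula~(\ref{eq-1.1}) forces the inserted vertex $w$ to have $f(w)=0$, hence $w=v'$. I then aim to show that $v'$ can be inserted into $\alpha$ in at most one position while keeping the path allowed: if $v'$ were inserted at two distinct positions $0\le i<j\le n+1$, the insertion at position $i$ would supply an edge $v'\to v_i$ (since $i\le n$) and the insertion at position $j$ would supply an edge $v_{j-1}\to v'$ (since $j\ge 1$), while the subpath $v_i\to v_{i+1}\to\cdots\to v_{j-1}$ from $\alpha$ closes these into a directed loop $v'\to v_i\to\cdots\to v_{j-1}\to v'$ containing $v'$, contradicting the hypothesis on $v'$. (When $j=i+1$ the middle subpath is trivial and the loop has the shape $v'v_iv'$.)

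For (ii), if $\beta<\alpha$ with $f(\beta)=f(\alpha)$, the deleted vertex must again be $v'$, so the task reduces to showing that $v'$ occurs at most once among $v_0,\ldots,v_n$. If instead $v_k=v_{k'}=v'$ with $k<k'$, then since $\alpha$ is elementary one has $v_{i-1}\ne v_i$ throughout, forcing $k'\ge k+2$; but then the subpath $v_k\to v_{k+1}\to\cdots\to v_{k'}$ is a directed loop through $v'$, again a contradiction. Hence at most one such $\beta$ exists, and (ii) follows. The main obstacle I anticipate is the bookkeeping in (i): one must carefully treat the boundary positions $i=0$ and $j=n+1$ (insertion at the very ends of $\alpha$) to confirm that in each case the relevant edge $v'\to v_i$ or $v_{j-1}\to v'$ really is produced, so that the loop assembled from the hypothesis on $v'$ is genuine.
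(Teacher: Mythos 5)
Your proposal is correct and follows essentially the same route as the paper's proof: the heart of both arguments is that two occurrences of $v'$ in an allowed path (for condition (ii)) or two admissible insertion positions for $v'$ (for condition (i)) would assemble, via a subpath of $\alpha$, into a directed loop through $v'$, contradicting the hypothesis. The only difference is organizational — you case-split on conditions (i)/(ii) while the paper case-splits on whether $\alpha$ contains $v'$ — and your explicit treatment of the boundary insertion positions is a welcome bit of extra care.
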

\begin{proof}
Let  $\alpha=v_0\cdots v_n$ be an arbitrary allowed elementary path on $G$. There are two cases.

{\sc Case~1}.  $v_i\not=v'$ for any $0\leq i\leq n$. We assert that there exists only one index $k$ with $-1\leq k\leq {n}$ such that $\gamma'=v_0\cdots v_k v' v_{k+1}\cdots v_n$ (for $k=-1$, $\gamma'=v'v_0\cdots v_n$) is an allowed elementary path on $G$. Suppose to the contrary, $\gamma''=v_0\cdots v_j v' v_{j+1}\cdots v_n$ is another allowed elementary path on $G$, $j\not=k$. Without loss of generality, $k<j$. Then $\tilde{\gamma}=v'v_{k+1}\cdots v_j v'$ is a directed loop on $G$ which contradicts that $v'$ does not belong to any directed loop. Hence for any allowed elementary path $\gamma>\alpha$,
\begin{eqnarray*}
\#\Big\{\gamma>\alpha\mid f(\gamma)=f(\alpha)\Big\}\leq 1
\end{eqnarray*}
and for any allowed elementary path $\beta<\alpha$, $f(\beta)<f(\alpha)$. 

{\sc Case~2}. $v_i=v'$ for some $0\leq i\leq n$. We assert that there is no any other vertex $v_j = v'$ for $0\leq j\not= i \leq n$. Suppose to the contrary, $v_i = v_j =v'$, $i\not=j$. Without loss of generality, $i<j$. Since $\alpha$ is an allowed elementary path on $G$, $v_{k-1}\not=v_k$ for each $1\leq k\leq n$. Then $j\not=i+1$.  Let $\alpha'=v_i\cdots v_j$. Then $\alpha'$ is a directed loop on $G$ which contradicts that $v'$ does not belong to any directed loop.  Hence, for any allowed elementary path $\beta<\alpha$,
\begin{eqnarray*}
\#\Big\{\beta<\alpha\mid f(\beta)=f(\alpha)\Big\}\leq 1
\end{eqnarray*}
and for any allowed elementary path $\gamma>\alpha$, $f(\gamma)> f(\alpha)$.

Combining Case 1 and Case 2,  by Definition~\ref{def-1.1},  $f$ is a discrete Morse function on $G$.  Moreover, since there is  only one vertex $v'\in V(G)$ such that $f(v')=0$,  $f$ satisfies   Condition ($*$). The theorem follows.
\end{proof}

Finally, for illustrating  the application of discrete Morse theory in the calculation of simplified homology groups of digraphs, we give the following examples.
\begin{example}\label{ex-1}
Let $G$ be a square. Then the transitive closure of $G$ is  a digraph $\overline{G}$  with $V(\overline{G})=V(G)$ and
\begin{eqnarray*}
E(\overline{G})=E(G)\cup \{v_0\to v_3\}.
\end{eqnarray*}

Let $f$ be a function on $G$ such that
\begin{eqnarray*}
f(v_0)=1, f(v_1)=0, f(v_2)=2, f(v_3)=3.
\end{eqnarray*}

\begin{figure}[!htbp]
 \begin{center}
\begin{tikzpicture}[line width=1.5pt]

\coordinate [label=left:$v_0$]    (A) at (1,0);
 \coordinate [label=right:$v_1$]   (B) at (2.5,1);
 \coordinate  [label=right:$v_2$]   (C) at (2.5,-1);
\coordinate  [label=right:$v_3$]   (D) at (4,0);
\fill (1,0) circle (2.5pt);
\fill (2.5,1) circle (2.5pt);
\fill (2.5,-1) circle (2.5pt);
\fill (4,0) circle (2.5pt);

\coordinate[label=left:$G$:] (G) at (0.5,0);
\draw [line width=1.5pt]  (A) -- (B);
\draw [line width=1.5pt]  (A) -- (C);
\draw [line width=1.5pt]   (B) -- (D);
\draw [line width=1.5pt]  (C) -- (D);
 \draw[->] (1,0) -- (2,2/3);

\draw[->] (2.5,1) -- (3,2/3);

\draw[->] (1,0) -- (2,-2/3);

\draw[->] (2.5,-1) -- (3,-2/3);

\coordinate [label=left:$v_0$]    (A) at (1+6,0);
 \coordinate [label=right:$v_1$]   (B) at (2.5+6,1);
 \coordinate  [label=right:$v_2$]   (C) at (2.5+6,-1);
\coordinate  [label=right:$v_3$]   (D) at (4+6,0);
\fill (1+6,0) circle (2.5pt);
\fill (2+6.5,1) circle (2.5pt);
\fill (2.5+6,-1) circle (2.5pt);
\fill (4+6,0) circle (2.5pt);

\coordinate[label=left:$\overline{G}$:] (H) at (6+0.5,0);
\draw [line width=1.5pt]  (A) -- (B);
 \draw [line width=1.5pt]  (A) -- (C);
 \draw [line width=1.5pt]  (A) -- (D);
\draw [line width=1.5pt] (B) -- (D);
\draw [line width=1.5pt]  (C) -- (D);
 \draw[->] (7,0) -- (8,2/3);

\draw[->]  (8.5,1) -- (9,2/3);

\draw[->]  (7,0) -- (8,-2/3);
\draw[->] (8.5,-1) -- (9,-2/3);
 \draw[->] (7,0) -- (9,0);

 \end{tikzpicture}
\end{center}

\caption{Example~\ref{ex-1}.}
\end{figure}
By  Theorem~\ref{th-33}, $f$ is a discrete Morse function on $G$ satisfying Condition ($*$) and by Theorem~\ref{th-666}, $f$ can be extended to be  a Morse function $\overline{f}$ on $\overline{G}$ such that $\overline{f}(v) = f(v)$ for all vertices $v\in V(\overline{G})$.
Then
\begin{eqnarray*}
\mathrm{Crit}_*(G)&=&R(v_1, v_2, v_0v_2, v_2v_3, v_0v_1v_3, v_0v_2v_3),\\
\mathrm{Crit}_*(\overline{G})&=&R(v_1, v_2, v_0v_2, v_2v_3,  v_0v_2v_3),\\
\Omega_*(G)&=&R(v_0, v_1, v_2, v_3, v_0v_1, v_0v_2, v_1v_3, v_2v_3, v_0v_1v_3-v_0v_2v_3).
\end{eqnarray*}
Note that $\mathrm{Crit}_*(\overline{G})\cap P(G)\not=\mathrm{Crit}_*(G)$.

Let $\overline{V}=\mathrm{grad}\overline{f}$ be the discrete gradient vector field on $\overline{G}$. Then
\begin{eqnarray*}
&& \overline{V}(v_0)=v_0v_1, ~~~\overline{V}(v_3)=-v_1v_3,\\
&& \overline{V}(v_0v_3)=v_0v_1v_3,\\
&& \overline{V}(\alpha)=0 ~ \text{for any other allowed elementary path}~\alpha \text{ on}~ \overline{G},\\
&& \overline{V}(\Omega_n(G))\subseteq \Omega_{n+1}(G)~ \text{for any}~n\geq 0. ~That~ is, \Omega_*(G) \text{ is } \overline{V}-\mathrm{invariant}.
\end{eqnarray*}

Let $\overline \Phi=\mathrm{Id}+ \partial \overline V+\overline V\partial$  be the discrete gradient flow of $\overline{G}$. Then
\begin{eqnarray*}
&\overline \Phi(v_0)=v_1,&\overline \Phi(v_1)=v_1,\\
&\overline \Phi(v_2)=v_2,
&\overline \Phi(v_3)=v_1,\\
&\overline \Phi(v_0v_1)=0,
&\overline \Phi(v_0v_2)=v_0v_2-v_0v_1,\\
&\overline \Phi(v_1v_3)=0,
&\overline \Phi(v_2v_3)=v_2v_3-v_1v_3,\\
&\overline \Phi(v_0v_3)=0,
&\overline \Phi(v_0v_1v_3)=0,\\
&~~~~~~~~~~~~~~~~~~~~\overline \Phi(v_0v_2v_3)=v_0v_2v_3-v_0v_1v_3.
\end{eqnarray*}
By calculate directly, we have that $\overline \Phi^\infty=\overline \Phi$. Then
\begin{eqnarray*}
\overline{\Phi}^{\infty}(\mathrm{Crit}_*(\overline{G}))&=&R(v_1, v_2, v_0v_2-v_0v_1, v_2v_3-v_1v_3, v_0v_2v_3-v_0v_1v_3),\\
\Omega_*(G)\cap\overline{\Phi}^{\infty}(\mathrm{Crit}_*(\overline{G}))&=&R(v_1, v_2, v_0v_2-v_0v_1, v_2v_3-v_1v_3, v_0v_2v_3-v_0v_1v_3).
\end{eqnarray*}

Therefore,
\begin{eqnarray*}
&&\partial_1(v_0v_2-v_0v_1)=v_2-v_1,~~~ \partial_1(v_2v_3-v_1v_3)=v_1-v_2\\
&&\partial_2(v_0v_2v_3-v_0v_1v_3)=(v_0v_2-v_0v_1)+(v_2v_3-v_1v_3)
\end{eqnarray*}
and
\begin{eqnarray*}
H_0(\Omega_*(G)\cap\overline \Phi^\infty(\mathrm{Crit}_*(\bar{G})))&=&R,\\
 H_1(\Omega_*(G)\cap\overline \Phi^\infty(\mathrm{Crit}_*(\bar{G})))&=&0,\\
 H_m(\Omega_*(G)\cap\overline \Phi^\infty(\mathrm{Crit}_*(\bar{G})))&=&0 ~\text{for} ~m\geq 2,
\end{eqnarray*}
which are consistent with the  path homology groups of $G$ given in \cite[Proposition~4.7]{9}.
\end{example}

\smallskip


\begin{example}\label{ex-2}
Consider the digraph $G$ given in Example~\ref{ex-1}.  Let $f: V(G)\longrightarrow [0, +\infty)$ be  a function on $G$ with $f(v_0)=0$ and $f(v_i)>0, 0<i\leq 3$, which is different from the function given in Example~\ref{ex-1}. By  Theorem~\ref{th-33}, $f$ is  a discrete Morse function on $G$ satisfying Condition ($*$) and by Theorem~\ref{th-666}, $f$ can be extended to be  a Morse function $\overline{f}$ on $\overline{G}$ such that $\overline{f}(v) = f(v)$ for all vertices $v\in V(\overline{G})$.  Obviously,
\begin{eqnarray*}
\Omega_*(G)=R(v_0, v_1, v_2, v_3, v_0v_1, v_0v_2, v_1v_3, v_2v_3, v_0v_1v_3-v_0v_2v_3)
\end{eqnarray*}

Since $\overline{f}(v_0)=0$ and $v_0\to v_i$ ($i\not=0$) are directed edges on $\overline{G}$, it follows that all allowed elementary paths on $\overline{G}$ except for the $0$-path $\{v_0\}$ are not critical. Hence $\mathrm{Crit}_*(\overline{G})=R(v_0)$.

Let $\overline{V}=\mathrm{grad}\overline{f}$ be the discrete gradient vector field on $\overline{G}$. Then
\begin{eqnarray}
&& \overline{V}(v_1)=-v_0v_1, ~~~\overline{V}(v_2)=-v_0v_2,~~~\overline{V}(v_3)=-v_0v_3,\nonumber\\
&& \overline{V}(v_1v_3)=-v_0v_1v_3,~~~~~~~~~~~~~~~\overline{V}(v_2v_3)=-v_0v_2v_3, \label{eq-t1}\\
&& \overline{V}(\alpha)=0 ~ \text{for any other allowed elementary path}~\alpha \text{ on}~ \overline{G},\nonumber
\end{eqnarray}
in which (\ref{eq-t1}) implies that $\Omega_*(G)$  is not  $\overline{V}-\mathrm{invariant}$.

Let $\overline \Phi=\mathrm{Id}+ \partial \overline V+\overline V\partial$  be the discrete gradient flow of $\overline{G}$. Then
\begin{eqnarray*}
&&\overline \Phi(v_0)=v_0,~~~~~\overline \Phi(v_1)=v_0,\\
&&\overline \Phi(v_2)=v_0,~~~~~\overline \Phi(v_3)=v_0,\\
&&\overline \Phi(\alpha)=0 ~ \text{for any other allowed elementary path}~\alpha \text{ on}~ \overline{G}.
\end{eqnarray*}
By calculate directly, we have that $\overline \Phi^\infty=\overline \Phi$. Then
\begin{eqnarray*}
\overline{\Phi}^{\infty}(\mathrm{Crit}_*(\overline{G}))&=&R(v_0),\\
\Omega_*(G)\cap\overline{\Phi}^{\infty}(\mathrm{Crit}_*(\overline{G}))&=&R(v_0).
\end{eqnarray*}

Therefore,
\begin{eqnarray*}
H_0(G)&=&H_0(\Omega_*(G)\cap\overline{\Phi}^{\infty}(\mathrm{Crit}_*(\overline{G})))\cong R,\\
H_m(G)&=&H_m(\Omega_*(G)\cap\overline{\Phi}^{\infty}(\mathrm{Crit}_*(\overline{G})))=0~\text{for all } m\geq 1.
\end{eqnarray*}
\end{example}

\smallskip
\begin{remark}
By Example~\ref{ex-1} and Example~\ref{ex-2}, we know  that in the discrete Morse theory for digraphs, the selection of zero points of discrete Morse functions is very important to simplify the calculation of homology groups. Generally speaking, we can choose the vertex with larger degree in the transitive closure of a digraph as the zero point.
\end{remark}

\smallskip

\begin{example}\label{ex-22}
Consider the following digraph $G$ and its transitive closure $\overline{G}$. Let $f: V(G)\longrightarrow [0, +\infty)$ be  a function on $G$ with $f(v_0)=0$ and $f(v_i)>0$, $0<i \leq 5$.

\begin{figure}[!htbp]
 \begin{center}
\begin{tikzpicture}[line width=1.5pt]

\coordinate [label=left:$v_0$]    (A) at (3,1);
 \coordinate [label=left:$v_1$]   (B) at (2,2);
 \coordinate  [label=right:$v_2$]   (C) at (4,2);
\coordinate  [label=left:$v_3$]   (D) at (2,4);
\coordinate  [label=right:$v_4$]   (E) at (4,4);
 \coordinate [label=left:$v_5$]   (F) at (3,5);

\coordinate[label=left:$G$:] (H) at (1,3);
   \draw [line width=1.5pt]  (A) -- (B);
    \draw [line width=1.5pt]  (A) -- (C);
      \draw [line width=1.5pt]  (B) -- (D);
       \draw [line width=1.5pt]  (B) -- (E);
            \draw [line width=1.5pt]  (C) -- (D);
                 \draw [line width=1.5pt]  (C) -- (E);
                  \draw [line width=1.5pt]  (F) -- (D);
 \draw [line width=1.5pt]  (F) -- (E);
 \draw[->] (3,1) -- (2.5,1.5);
 \draw[->] (3,1) -- (3.5,1.5);

\draw[->] (2,2) -- (2,3.5);
\draw[->] (2,2) -- (3.8,3.8);

\draw[->] (4,2) -- (4,3.5);
\draw[->] (4,2) -- (2.2,3.8);

\draw[->] (3,5) -- (2.5,4.5);
\draw[->] (3,5) -- (3.5,4.5);

\fill (3,1)  circle (2.5pt) (2,2)  circle (2.5pt)  (4,2) circle (2.5pt) (2,4) circle (2.5pt) (4,4) circle (2.5pt) (3,5)  circle (2.5pt);

\coordinate [label=left:$v_0$]    (A) at (3+6,1);
 \coordinate [label=left:$v_1$]   (B) at (2+6,2);
 \coordinate  [label=right:$v_2$]   (C) at (4+6,2);
\coordinate  [label=left:$v_3$]   (D) at (2+6,4);
\coordinate  [label=right:$v_4$]   (E) at (4+6,4);
 \coordinate [label=left:$v_5$]   (F) at (3+6,5);

\coordinate[label=left:$\overline{G}$:] (I) at (1+6,3);
   \draw [line width=1.5pt]  (A) -- (B);
    \draw [line width=1.5pt]  (A) -- (C);
    \draw [line width=1.5pt]  (A) -- (D);
    \draw [line width=1.5pt]  (A) -- (E);
      \draw [line width=1.5pt]  (B) -- (D);
       \draw [line width=1.5pt]  (B) -- (E);
            \draw [line width=1.5pt]  (C) -- (D);
                 \draw [line width=1.5pt]  (C) -- (E);
                  \draw [line width=1.5pt]  (F) -- (D);
 \draw [line width=1.5pt]  (F) -- (E);
 \draw[->] (3+6,1) -- (2.5+6,1.5);
 \draw[->] (3+6,1) -- (3.5+6,1.5);
 \draw[->] (3+6,1) -- (8.4,2.8);
\draw[->] (3+6,1) -- (9.6,2.8);

\draw[->] (2+6,2) -- (2+6,3.5);
\draw[->] (2+6,2) -- (3.8+6,3.8);

\draw[->] (4+6,2) -- (4+6,3.5);
\draw[->] (4+6,2) -- (2+6.2,3.8);

\draw[->] (3+6,5) -- (2.5+6,4.5);
\draw[->] (3+6,5) -- (3.5+6,4.5);

\fill (3+6,1)  circle (2.5pt) (2+6,2)  circle (2.5pt)  (4+6,2) circle (2.5pt) (2+6,4) circle (2.5pt) (4+6,4) circle (2.5pt) (3+6,5)  circle (2.5pt);

 \end{tikzpicture}
\end{center}
\caption{Example~\ref{ex-22}.}
\end{figure}
 By  Theorem~\ref{th-33}, $f$ is  a discrete Morse function on $G$ satisfying Condition ($*$). By Theorem~\ref{th-666}, $f$ can be extended to be a Morse function $\overline{f}$ on $\overline{G}$ such that $\overline{f}(v)=f(v) $ for all vertices $v\in V(\overline{G})$. Then
\begin{eqnarray*}
\mathrm{Crit}_*(\overline{G})&=&R(v_0, v_5, v_5v_3, v_5v_4),\\
\Omega_*(G)&=&R(v_0, v_1, v_2, v_3, v_4, v_5,  v_0v_1, v_0v_2, v_1v_3, v_1v_4,  v_2v_3, v_2v_4, v_5v_3, v_5v_4,\\
 &&v_0v_1v_3-v_0v_2v_3, v_0v_1v_4-v_0v_2v_4).\\
\end{eqnarray*}

Let $\overline{V}=\mathrm{grad}\overline{f}$ be the discrete gradient vector field on $\overline{G}$. Then
\begin{eqnarray}
&& \overline{V}(v_1)=-v_0v_1, ~~~\overline{V}(v_2)=-v_0v_2,\nonumber\\
&&\overline{V}(v_3)=-v_0v_3,~~~\overline{V}(v_4)=-v_0v_4,\nonumber\\
&&\overline{V}(v_1v_3)=-v_0v_1v_3,~~~\overline{V}(v_1v_4)=-v_0v_1v_4,~~~\label{eq-s1l}\\
&& \overline{V}(v_2v_3)=-v_0v_2v_3,~~~\overline{V}(v_2v_4)=-v_0v_2v_4,~~~\label{eq-s12}\\
&& \overline{V}(\alpha)=0 ~ \text{for any other allowed elementary path}~\alpha \text{ on}~ \overline{G}.\nonumber
\end{eqnarray}
By (\ref{eq-s1l}) and (\ref{eq-s12}),  $\overline{V}(\Omega_1(G))\nsubseteq \Omega_2(G)$. This implies that $\Omega_*(G)$ is not $\overline{V}-\mathrm{invariant}$.

Let $\overline \Phi=\mathrm{Id}+ \partial \overline V+\overline V\partial$  be the discrete gradient flow of $\overline{G}$. Then
\begin{eqnarray*}
&\overline \Phi(v_0)=v_0,&\overline \Phi(v_1)=v_0,\\
&\overline \Phi(v_2)=v_0,
&\overline \Phi(v_3)=v_0,\\
&\overline \Phi(v_4)=v_0,&\overline \Phi(v_5)=v_5,\\
&\overline \Phi(v_0v_1)=0,&\overline \Phi(v_0v_2)=0,\\
&\overline \Phi(v_0v_3)=0,
&\overline \Phi(v_0v_4)=0,\\
&\overline \Phi(v_1v_3)=0,
&\overline \Phi(v_1v_4)=0,\\
&\overline \Phi(v_2v_3)=0,&\overline \Phi(v_2v_4)=0,\\
\end{eqnarray*}
\begin{eqnarray*}
&\overline \Phi(v_5v_3)=v_5v_3-v_0v_3,
&\overline \Phi(v_5v_4)=v_5v_4-v_0v_4,\\
&\overline \Phi(v_0v_1v_4)=0,
&\overline \Phi(v_0v_1v_3)=0,\\
&\overline \Phi(v_0v_2v_4)=0,
&\overline \Phi(v_0v_2v_3)=0.\\
\end{eqnarray*}
By calculate directly, we have that $\overline \Phi^\infty=\overline \Phi$. Then
\begin{eqnarray*}
\overline \Phi^\infty(\mathrm{Crit}_*(\bar{G}))&=&R(v_0, v_5, v_5v_3-v_0v_3, v_5v_4-v_0v_4),\\
\Omega_*(G)\cap\overline \Phi^\infty(\mathrm{Crit}_*(\bar{G}))&=&R(v_0, v_5, v_5v_3-v_0v_3, v_5v_4-v_0v_4).
\end{eqnarray*}


Hence,
\begin{eqnarray*}
\partial_1(v_5v_3-v_0v_3)&=&v_0-v_5\\
\partial_1(v_5v_4-v_0v_4)&=&v_0-v_5
\end{eqnarray*}
and
\begin{eqnarray*}
H_0(\Omega_*(G)\cap\overline \Phi^\infty(\mathrm{Crit}_*(\bar{G})))&=&R\\
 H_1(\Omega_*(G)\cap\overline \Phi^\infty(\mathrm{Crit}_*(\bar{G})))&=&R\\
 H_m(\Omega_*(G)\cap\overline \Phi^\infty(\mathrm{Crit}_*(\bar{G})))&=&0~ \text{for}~m\geq 2,
\end{eqnarray*}
which are consistent with the  path homology groups of $G$.
\end{example}

\begin{remark}
By Example~\ref{ex-2} and Example~\ref{ex-22}, the condition that  $\Omega_*(G)$ is  $\overline{V}-\mathrm{invariant}$ in  Theorem~\ref{th-8.3} is sufficient but not necessary. That is, even if the digraph does not satisfy this condition, we may have an isomorphism of homology groups given in (\ref{eq-2.17}).
\end{remark}

\smallskip
\section{Further Discussion}
In this section, we will study the  matrix representation of Theorem~\ref{th-8.3}, which will be helpful for us to find  efficient algorithms computing the homology (persistent homology) groups of digraphs applying our discrete Morse theory for digraphs in the future.

Let $G$ be a digraph and $\overline{G}$ the transitive closure of $G$. Choose all allowed elementary $n$-paths as a basis of $P(\overline{G})$, denoted as $B_n$. Let $M(\bullet)$ be the matrix corresponding to the operator $\bullet$ and $E_n$ the  identity matrix of order $n$. Let
\begin{eqnarray*}
\overline{V}_n&:& P_n(\bar{G})\longrightarrow P_{n+1}(\bar{G}),\\
\partial_n&:& P_n(\bar{G})\longrightarrow P_{n-1}(\bar{G}),\\
\overline{\Phi}_n&:& P_n(\bar{G})\longrightarrow P_n(\bar{G}),\\
\overline \Phi_n^\infty\mid_{\mathrm{Crit}_n(\bar{G})}&:& \mathrm{Crit}_n(\bar{G})\longrightarrow P_n^{\overline \Phi}(\bar{G}).
\end{eqnarray*}
for each $n\geq 0$.


We illustrate the calculation process of homology groups in Example~\ref{ex-1} with the matrix representation of operators. Since
\begin{eqnarray*}
P_0(\bar{G})&=&R(v_0, v_1, v_2, v_3);\\
P_1(\bar{G})&=&R(v_0v_1, v_0v_2, v_0v_3, v_1v_3, v_2v_3);\\
P_2(\bar{G})&=&R(v_0v_1v_3, v_0v_2v_3);\\
P_3(\bar{G})&=&0.
\end{eqnarray*}
Then
\begin{equation}\label{eq-01}
\overline{V}_1
\left[
\begin{array}{c}
   v_0v_1 \\
   v_0v_2 \\
   v_0v_3\\
   v_1v_3\\
   v_2v_3
\end{array}
\right]
=
\left[
\begin{array}{cc}
    0 & 0 \\
    0 & 0 \\
    1 & 0 \\
    0 & 0\\
    0 & 0
\end{array}
\right]
\left[
\begin{array}{c}
    v_0v_1v_3\\
    v_0v_2v_3
\end{array}
\right],
\end{equation}

\begin{equation}\label{eq-02}
\overline{V}_0
\left[
\begin{array}{c}
   v_0\\
   v_1 \\
   v_2 \\
   v_3
\end{array}
\right]
=
\left[
\begin{array}{ccccc}
 1& 0 & 0 & 0 & 0 \\
 0& 0 & 0 & 0 & 0 \\
 0& 0 & 0 & 0 & 0 \\
 0& 0 & 0 & -1 & 0
\end{array}
\right]
\left[
\begin{array}{c}
   v_0v_1 \\
   v_0v_2 \\
   v_0v_3\\
   v_1v_3\\
   v_2v_3
\end{array}
\right],
\end{equation}

\begin{equation}\label{eq-03}
\partial_2
\left[
\begin{array}{c}
   v_0v_1v_3\\
   v_0v_2v_3
\end{array}
\right]
=
\left[
\begin{array}{ccccc}
 1 & 0 & -1 & 1 &  0\\
 0 & 1 & -1 & 0 &  1
\end{array}
\right]
\left[
\begin{array}{c}
    v_0v_1\\
    v_0v_2\\
    v_0v_3\\
    v_1v_3\\
    v_2v_3
\end{array}
\right],
\end{equation}
and
\begin{equation}\label{eq-04}
\partial_1
\left[
\begin{array}{c}
   v_0v_1\\
    v_0v_2\\
    v_0v_3\\
    v_1v_3\\
    v_2v_3
\end{array}
\right]
=
\left[
\begin{array}{cccc}
 -1 &  1 &  0 &  0\\
 -1 &  0 &  1 &  0 \\
 -1 &  0 &  0 &  1 \\
  0 & -1 &  0 &  1 \\
  0 &  0 & -1 &  1
\end{array}
\right]
\left[
\begin{array}{c}
    v_0\\
    v_1\\
    v_2\\
    v_3
\end{array}
\right].
\end{equation}

By (\ref{eq-01})-(\ref{eq-04}), we have that
\begin{equation*}
M(\overline{V}_1)=
 \left[
 \begin{array}{ccccc}
     0 & 0 \\
     0 & 0 \\
     1 & 0 \\
     0 & 0 \\
     0 & 0
 \end{array}
 \right],
 \end{equation*}

 \begin{equation*}
M(\overline{V}_0)=
 \left[
 \begin{array}{ccccc}
   1 & 0 & 0 & 0 & 0 \\
   0 & 0 & 0 & 0 & 0 \\
   0 & 0 & 0 & 0 & 0 \\
   0 & 0 & 0 & -1 & 0
 \end{array}
 \right],
 \end{equation*}

  \begin{equation*}
M(\partial_2)=
 \left[
 \begin{array}{ccccc}
   1 & 0 & -1 & 1 & 0 \\
   0 & 1 & -1 & 0 & 1\\
 \end{array}
 \right],
 \end{equation*}
 and
 \begin{equation*}
M(\partial_1)=
 \left[
 \begin{array}{cccc}
-1 & 1 & 0 & 0\\
-1 & 0 & 1 & 0\\
-1 & 0 & 0 & 1\\
0 & -1 & 0 & 1\\
0 & 0 & -1 & 1\\
 \end{array}
 \right].
 \end{equation*}
 Hence,
  \begin{equation*}
M(\overline \Phi_1)=E_1+M(\partial_1)M(\overline{V}_0)+M(\overline{V}_1)M(\partial_2)=
 \left[
 \begin{array}{ccccc}
0 & 0 & 0 & 0 & 0\\
-1 & 1 & 0 & 0 & 0\\
0 & 0 & 0 & 0 & 0\\
0 & 0 & 0 & 0 & 0\\
0 & 0 & 0 & -1 & 1
 \end{array}
 \right].
 \end{equation*}
By calculate directly, we have that $(M(\overline \Phi_1))^{\infty}=M(\overline \Phi_1)\cdot M(\overline \Phi_1)\cdot \cdots=M(\overline \Phi_1)=M(\overline \Phi_1^\infty)$. Since $\mathrm{Crit}_1(\bar{G})=R(v_0v_2, v_2v_3)$. Then $M(\overline \Phi_1^\infty\mid_{\mathrm{Crit}_1(\bar{G})})$ is the matrix composed of the second and the fifth rows of $M(\overline \Phi_1)$. That is,
  \begin{equation*}\label{eq-09}
M(\overline \Phi_1^\infty\mid_{\mathrm{Crit}_1(\bar{G})})=
 \left[
 \begin{array}{ccccc}
-1 & 1 & 0 & 0 & 0\\
0 & 0 & 0 & -1 & 1
 \end{array}
 \right].
 \end{equation*}
Since $\Omega_1(G)=R(v_0v_1, v_0v_2, v_1v_3, v_2v_3)$, it follows that
\begin{eqnarray*}
\Omega_1(G)\cap\overline \Phi^\infty(\mathrm{Crit}_1(\bar{G}))=R(v_0v_2-v_0v_1, v_2v_3-v_1v_3).
\end{eqnarray*}

By (\ref{eq-04}), we have that
\begin{equation}\label{eq-10}
M(\partial_1\mid_{\Omega_1(G)\cap\overline \Phi^\infty(\mathrm{Crit}_1(\bar{G}))})=
 \left[
 \begin{array}{cccc}
0 & -1 & 1 & 0\\
0 & 1 & -1 & 0\\
 \end{array}
 \right].
 \end{equation}
 Hence, we can obtain that
 \begin{eqnarray*}
 \mathrm{Ker}(\partial_1\mid_{\Omega_1(G)\cap\overline \Phi^\infty(\mathrm{Crit}_1(\bar{G}))})=R(v_0v_2-v_0v_1+v_2v_3-v_1v_3)
 \end{eqnarray*}
 and
 \begin{eqnarray*}
 \mathrm{Im}(\partial_1\mid_{\Omega_1(G)\cap\overline \Phi^\infty(\mathrm{Crit}_1(\bar{G}))})=R(v_1-v_2)
 \end{eqnarray*}
 from (\ref{eq-10}).

Similarly,  the matrix of $\overline \Phi_0: P_0(\bar{G})\longrightarrow  P_0(\bar{G})$ is
 \begin{equation*}
M(\overline \Phi_0)=
 \left[
 \begin{array}{cccc}
0& 1& 0 & 0 \\
0& 1& 0 & 0\\
0& 0& 1 & 0\\
0& 1& 0 & 0
 \end{array}
 \right]
 \end{equation*}
and the matrix of $\overline \Phi_2: P_2(\bar{G})\longrightarrow  P_2(\bar{G})$ is
 \begin{equation*}
M(\overline \Phi_2)=
 \left[
 \begin{array}{cc}
0 & 0 \\
-1& 1
 \end{array}
 \right].
 \end{equation*}

By calculate directly, \begin{eqnarray*}
(M(\overline \Phi_0))^{\infty}=M(\overline \Phi_0)=M(\overline \Phi_0^\infty)
\end{eqnarray*}
 and
 \begin{eqnarray*}
 (M(\overline \Phi_2))^{\infty}=M(\overline \Phi_2)=M(\overline \Phi_2^\infty).
 \end{eqnarray*}

Then
\begin{eqnarray*}
\overline \Phi_0^\infty(\mathrm{Crit}_0(\bar{G}))&=&\overline \Phi_0(\mathrm{Crit}_0(\bar{G}))\\
&=&\mathrm{Crit}_0(\bar{G}).
\end{eqnarray*}
Hence,
\begin{eqnarray*}
\Omega_0(G)\cap\overline \Phi^\infty(\mathrm{Crit}_0(\bar{G}))&=&\mathrm{Crit}_0(\bar{G})\\
&=&R(v_1, v_2)
\end{eqnarray*}
and
 \begin{eqnarray*}
 \mathrm{Ker}(\partial_0\mid_{\Omega_0(G)\cap\overline \Phi^\infty(\mathrm{Crit}_0(\bar{G}))})=R(v_1, v_2).
 \end{eqnarray*}

Furthermore, since $\Omega_2(G)=R(v_0v_1v_3-v_0v_2v_3)$ and $\mathrm{Crit}_2(\bar{G})=R(v_0v_2v_3)$, it follows that
\begin{eqnarray*}
\Omega_2(G)\cap\overline \Phi^\infty(\mathrm{Crit}_2(\bar{G}))=R(v_0v_1v_3-v_0v_2v_3).
\end{eqnarray*}
By (\ref{eq-03}), we have that
\begin{equation}\label{eq-11}
M(\partial_2\mid_{\Omega_2(G)\cap\overline \Phi^\infty(\mathrm{Crit}_2(\bar{G}))})=
 \left[
 \begin{array}{ccccc}
1 & -1 & 0 & 1 &-1
 \end{array}
 \right].
 \end{equation}
Hence, we  can obtain that
 \begin{eqnarray*}
 \mathrm{Ker}(\partial_2\mid_{\Omega_2(G)\cap\overline \Phi^\infty(\mathrm{Crit}_2(\bar{G}))})=0
 \end{eqnarray*}
 and
 \begin{eqnarray*}
 \mathrm{Im}(\partial_2\mid_{\Omega_2(G)\cap\overline \Phi^\infty(\mathrm{Crit}_2(\bar{G}))})=R(v_0v_1-v_0v_2+v_1v_3-v_2v_3)
 \end{eqnarray*}
from (\ref{eq-11}).

Therefore,
\begin{eqnarray*}
H_0(\Omega_*(G)\cap\overline \Phi^\infty(\mathrm{Crit}_*(\bar{G})))&=&R,\\
H_1(\Omega_*(G)\cap\overline \Phi^\infty(\mathrm{Crit}_*(\bar{G})))&=&0,\\
H_m(\Omega_*(G)\cap\overline \Phi^\infty(\mathrm{Crit}_*(\bar{G})))&=&0 ~\text{for} ~m\geq 2,
\end{eqnarray*}



\bigskip
\noindent {\bf Acknowledgement}. The authors would like to express deep gratitude to the reviewer(s) for their careful reading, valuable comments, and helpful suggestions.

\noindent {$^{*}$ Partially supported by National Natural Science Foundation of China, Grant No. 12071245.}

\noindent {$^{\dag}$ Partially supported by Science and Technology Project of Hebei Education Department (QN2019333), the Natural Fund of Cangzhou Science and Technology Bureau (No.197000002) and a Project of Cangzhou Normal University (No.xnjjl1902).}

\noindent {$^{\S}$ Partially supported by DMS-1737873.}

 Yong Lin

 Address: Yau Mathematical Sciences Center, Tsinghua University, Beijing 100084, China.

 e-mail: yonglin@tsinghua.edu.cn

\medskip

Chong Wang  (for correspondence)

 Address: $^1$School of Mathematics, Renmin University of China, Beijing 100872, China.

 $^2$School of Mathematics and Statistics, Cangzhou Normal University, 061000 China.

 e-mail:  wangchong\_618@163.com

  \medskip

Shing-Tung Yau

Department of Mathematics, Harvard University, Cambridge MA 02138, USA.

e-mail: yau@math.harvard.edu
\end{document}